\documentclass[10pt]{article}

% ----------------------------------------

\usepackage{amsmath,amsbsy,amssymb,latexsym,amsthm}
\usepackage{a4wide}

%--------------------------------------

\def\Dbar{\leavevmode\lower.6ex\hbox to 0pt{\hskip-.23ex
    \accent"16\hss}D}

%--------------------------------------

\newtheorem{theorem}{Theorem}
\newtheorem{corollary}[theorem]{Corollary}
\newtheorem{proposition}[theorem]{Proposition}
\newtheorem{definition}[theorem]{Definition}
\newtheorem{remark}[theorem]{Remark}
\newtheorem{example}[theorem]{Example}
\newtheorem{lemma}[theorem]{Lemma}

% ----------------------------------------

\newenvironment{keywords}{\begin{center}
\begin{minipage}[c]{13.4cm} {\bf Keywords:}} {\end{minipage}
\end{center}}

\newenvironment{msc}{\begin{center}
\begin{minipage}[c]{13.4cm} {\bf Mathematics Subject Classification 2010:}} {\end{minipage}
\end{center}}

% ----------------------------------------

\begin{document}

\title{Variational Calculus with Conformable Fractional Derivatives\thanks{This 
is a preprint of a paper whose final and definite form will appear 
in the \emph{IEEE/CAA Journal of Automatica Sinica}, ISSN 2329-9266. 
Submitted 01-Oct-2015; Revised 20-April-2016; Accepted 22-June-2016.}}

\author{Matheus J. Lazo$^{1}$\\
{\tt matheuslazo@furg.br}
\and
Delfim F. M. Torres$^{2,}$\thanks{Corresponding author.
Tel: +351 234370668; Fax: +351 234370066; Email: delfim@ua.pt}\\
{\tt delfim@ua.pt}}

\date{$^{1}$Instituto de Matem\'{a}tica, Estat\'{\i}stica e F\'{\i}sica
--- FURG, Rio Grande, RS, Brazil\\[0.3cm]
$^{2}$\text{Center for Research and Development in Mathematics and Applications (CIDMA)}
Department of Mathematics, University of Aveiro, 3810-193 Aveiro, Portugal}

% ---------------------------------------------------

\maketitle

% ---------------------------------------------------

\begin{abstract}
Invariant conditions for conformable fractional problems
of the calculus of variations under the presence of external
forces in the dynamics are studied. Depending on the type of
transformations considered, different necessary conditions
of invariance are obtained. As particular cases, we prove
fractional versions of Noether's symmetry theorem. Invariant conditions
for fractional optimal control problems, using the Hamiltonian formalism,
are also investigated. As an example of potential application in Physics,
we show that with conformable derivatives it is possible to
formulate an Action Principle for particles under frictional forces
that is far simpler than the one obtained with classical fractional derivatives.
\end{abstract}

\begin{msc}
26A33, 34A08, 49K05, 49K10, 49S05.
\end{msc}

\begin{keywords}
conformable fractional derivative;
Noether's theorem;
invariant variational conditions;
fractional calculus of variations;
fractional optimal control.
\end{keywords}

% ----------------------------------------

\section{Introduction}

Fractional calculus is a generalization of (integer) differential calculus,
allowing to define integrals and derivatives of real or complex order
\cite{Kilbas,miller,Podlubny}. It had its origin in the 1600s and
for three centuries the theory of fractional derivatives
developed as a pure theoretical field of mathematics, useful
only for mathematicians. The theory took more or less finished
form by the end of the XIX century. In the last few decades,
fractional differentiation has been ``rediscovered'' by applied
scientists, proving to be very useful in various fields:
physics (classic and quantum mechanics, thermodynamics, etc.),
chemistry, biology, economics, engineering,
signal and image processing, and control theory \cite{Machado}.
One can find in the existent literature several
definitions of fractional derivatives,
including the Riemann--Liouville, Caputo, Riesz,
Riesz--Caputo, Weyl, Grunwald--Letnikov, Hadamard, and Chen derivatives.
Recently, a simple solution to the discrepancies between known definitions
was presented with the introduction of a new fractional notion,
called the conformable derivative \cite{MR3164103}.
The new definition is a natural extension of the usual derivative,
and satisfies the main properties one expects in a derivative:
the conformable derivative of a constant is zero;
satisfies the standard formulas of the derivative of the product
and of the derivative of the quotient of two functions;
and satisfies the chain rule. Besides simple and similar
to the standard derivative, one can say that the conformable
derivative combines the best characteristics
of known fractional derivatives \cite{Abd:15}.
For this reason, the subject is now under strong development:
see \cite{And:Avery,MR3326681,MyID:337,MyID:324}
and references therein.

The fractional calculus of variation was introduced
in the context of classical mechanics when Riewe \cite{CD:Riewe:1997}
showed that a Lagrangian involving fractional time derivatives leads
to an equation of motion with non-conservative forces such as friction.
It is a remarkable result since frictional and non-conservative forces
are beyond the usual macroscopic variational treatment \cite{Bauer}.
Riewe generalized the usual calculus of variations for a Lagrangian
depending on Riemann--Liouville fractional derivatives \cite{CD:Riewe:1997}
in order to deal with linear non-conservative forces. Actually,
several approaches have been developed to generalize the calculus of
variations to include problems depending on Caputo fractional
derivatives, Riemann--Liouville fractional derivatives, Riesz
fractional derivatives and others
\cite{CD:Agrawal:2002,AT,BA,Cresson,LazoTorres1,OMT,MyID:227}
(see \cite{book:frac:ICP2,MR3331286,book:frac} for the state of the art). 
Among theses approaches, recently 
it was show that the action principle for dissipative
systems can be generalized, fixing the mathematical
inconsistencies present in the original Riewe's formulation,
by using Lagrangians depending on classical and Caputo
derivatives \cite{LazoCesar}.

In this paper we work with conformable fractional derivatives in the context
of the calculus of variations and optimal control \cite{book:frac:ICP2}. In
order to illustrate the potential application of conformable fractional
derivatives in physical problems we show that it is possible to formulate an
action principle with conformable fractional calculus for the frictional
force free from the mathematical inconsistencies found in the Riewe
original approach and far simpler than the formulations proposed in \cite{LazoCesar}.
Furthermore, we obtain a generalization of Noether's symmetry theorem for the
fractional variational problems and we also consider the conformable fractional
optimal control problem. Emmy Noether was the first who proved, in 1918, that the notions of
invariance and constant of motion are connected: when a system
is invariant under a family of transformations,
then a conserved quantity along the Euler--Lagrange extremals
can be obtained \cite{Logan,Torres2}.
All conservation laws of Mechanics, \textrm{e.g.},
conservation of energy or conservation of momentum,
are easily explained from Noether's theorem.
In this paper we study necessary conditions for invariance under
a family of continuous transformations, where the Lagrangian contains
a conformable fractional derivative of order $\alpha\in(0,1)$. When $\alpha\to1$,
we obtain some well-known results, in particular the Noether theorem \cite{Torres2}.
The advantages of our fractional results are clear.
Indeed, the classical constants of motion appear naturally
in closed systems while in practical terms closed systems do not exist:
forces that do not store energy, so-called nonconservative or
dissipative forces, are always present in real systems.
Fractional dynamics provide a good way to model
nonconservative systems \cite{CD:Riewe:1997}.
Nonconservative forces remove energy from the systems and, as a consequence,
the standard Noether constants of motion are broken \cite{Frederico:Torres3}.
Our results assert that it is still possible to obtain Noether-type
theorems, which cover both conservative and nonconservative cases,
and that this is done in a particularly simple and elegant way via the conformable
fractional approach. This is in contrast with the approaches
followed in \cite{Frederico:Torres1,Frederico:Torres2,Frederico:Torres5,Frederico:Torres6}.

The paper is organized as follows. In Section~\ref{sec:Preli}
we collect some necessary definitions and results on the conformable fractional calculus
needed in the sequel. In Section~\ref{sec:EL} we obtain the conformable fractional
Euler--Lagrange equation and in Section~\ref{sec:friction} we formulate an action principle
for dissipative systems, as an example of application and motivation
to study the conformable calculus of variations. In Section~\ref{sec:Dubois}
we present an immediate consequence of the Euler--Lagrange equation,
that we use later in Sections~\ref{sec:Invar} and \ref{sec:Noether},
where we prove, respectively, some necessary conditions
for invariant fractional problems and a conformable fractional Noether theorem.
We then review the obtained results using the Hamiltonian language
in Section~\ref{sec:Hamilton}. In Section~\ref{sec:FOC}
we consider the conformable fractional optimal control problem,
where the dynamic constraint is given by a conformable fractional derivative.
Using the Hamiltonian language, we provide an invariant condition.
In Section~\ref{sec:MultiDim} we consider the multi-dimensional case,
for several independent and dependent variables.

% ----------------------------------------

\section{Preliminaries}
\label{sec:Preli}

In this section we review the conformable fractional calculus
\cite{Abd:15,And:Avery,MR3164103}. The conformable fractional derivative
is a new well-behaved definition of fractional derivative,
based on a simple limit definition. We review in this section the generalization
of \cite{MR3164103} proposed in \cite{Abd:15}.

\begin{definition}
\label{Da1}
The left conformable fractional derivative of order $0< \alpha \leq 1$
starting from $a \in \mathbb{R}$ of a function $f:[a,b]\rightarrow \mathbb{R}$
is defined by
\begin{equation}
\label{a1}
\frac{d^{\alpha}_a}{dx^{\alpha}_a} f(x)=f^{(\alpha)}_a(x)
=\lim_{\epsilon \rightarrow 0}
\frac{f(x+\epsilon(x-a)^{1-\alpha})-f(x)}{\epsilon}.
\end{equation}
If the limit \eqref{a1} exist, then we say that $f$ is left $\alpha$-differentiable.
Furthermore, if $f^{(\alpha)}_a(x)$ exist for $x\in (a,b)$, then
$f^{(\alpha)}_a(a)=\lim_{x\rightarrow a^+}f^{(\alpha)}_a(x)$
and $f^{(\alpha)}_a(b)=\lim_{x\rightarrow b^-}f^{(\alpha)}_a(x)$.

The right conformable fractional derivative of order $0< \alpha \leq 1$
terminating at $b \in \mathbb{R}$ of a function
$f:[a,b]\rightarrow \mathbb{R}$ is defined by
\begin{equation}
\label{a2}
\frac{{_bd^{\alpha}}}{{_bdx^{\alpha}}} f(x)={_bf^{(\alpha)}}(x)
=-\lim_{\epsilon \rightarrow 0} \frac{f(x+\epsilon(b-x)^{1-\alpha})-f(x)}{\epsilon}.
\end{equation}
If the limit \eqref{a2} exist, then we say that $f$ is right $\alpha$-differentiable.
Furthermore, if ${_bf^{(\alpha)}}(x)$ exist for $x\in(a,b)$, then
${_bf^{(\alpha)}}(a)=\lim_{x\rightarrow a^+}{_bf^{(\alpha)}}(x)$ and
${_bf^{(\alpha)}}(b)=\lim_{x\rightarrow b^-}{_bf^{(\alpha)}}(x)$.
\end{definition}

It is important to note that for $\alpha=1$ the conformable fractional derivatives
\eqref{a1} and \eqref{a2} reduce to first order ordinary derivatives. Furthermore,
despite the definition of the conformable fractional derivatives \eqref{a1}
and \eqref{a2} can be generalized for $\alpha>1$ (see \cite{Abd:15}),
we consider only $0< \alpha \leq 1$ in the present work. Is is also important to note
that differently from the majority of definitions of fractional derivative,
including the popular Riemann--Liouville and Caputo fractional derivatives,
the fractional derivatives \eqref{a1} and \eqref{a2} are local operators and
are related to ordinary derivatives if the function is differentiable
(see Remark~\ref{Ra1}). For more on local fractional derivatives, we refer
the reader to \cite{MyID:296,MyID:320} and references therein.

\begin{remark}
\label{Ra1}
If $f\in C^1[a,b]$, then we have from \eqref{a1} that
\begin{equation}
\label{a3}
f^{(\alpha)}_a(x) =(x-a)^{1-\alpha}f'(x)
\end{equation}
and from \eqref{a2} that
\begin{equation}
\label{a4}
{_bf^{(\alpha)}}(x)  =-(b-x)^{1-\alpha}f'(x),
\end{equation}
where $f'(x)$ stands for the ordinary first order derivative of $f(x)$.
\end{remark}

From \eqref{a3} and \eqref{a4} it is easy to see that the conformable fractional
derivative of a constant is zero, differently from the Riemann--Liouville
derivative of a constant, and for the power functions $(x-a)^p$ and $(b-x)^p$
one has $\frac{d^{\alpha}_a}{dx^{\alpha}_a} (x-a)^p=p(x-a)^{p-\alpha}$
and $\frac{{_bd^\alpha}}{{_bdx^{\alpha}}} (b-x)^p=p(b-x)^{p-\alpha}$
for all $p\in \mathbb{R}$.

The most remarkable consequence of definitions \eqref{a1} and \eqref{a2} is that
the conformable fractional derivatives satisfy very simple fractional versions
of chain and product rules.

\begin{proposition}[See \cite{Abd:15,MR3164103}]
\label{Ra2}
Let $0<\alpha<1$ and $f$ and $g$
be $\alpha$-differentiable functions. Then,
\begin{itemize}
\item[(i)] $(c_1 f+c_2 g)^{(\alpha)}_a(x)=c_1f^{(\alpha)}_a(x)
+c_2g^{(\alpha)}_a(x)$ and ${_b(c_1 f+c_2 g)^{(\alpha)}}(x)
=c_1{_bf^{(\alpha)}}(x)+c_2{_bg^{(\alpha)}}(x)$ for all $c_1,c_2\in \mathbb{R}$;

\item[(ii)] $(fg)^{(\alpha)}_a(x)= f^{(\alpha)}_a(x)g(x)+ f(x)g^{(\alpha)}_a(x)$
and ${_b(fg)^{(\alpha)}}(x)= {_bf^{(\alpha)}}(x)g(x)+ f(x){_bg^{(\alpha)}}(x)$;

\item[(iii)] $\left(\frac{f}{g}\right)^{(\alpha)}_a(x)= \frac{f^{(\alpha)}_a(x)g(x)
- f(x)g^{(\alpha)}_a(x)}{g^2(x)}$ and $_b{\left(\frac{f}{g}\right)}^{(\alpha)}(x)
= \frac{{_bf^{(\alpha)}}(x)g(x) - f(x){_bg^{(\alpha)}}(x)}{g^2(x)}$;

\item[(iv)] if $g(x) \ge a$, then
$(f\circ g)^{(\alpha)}_a(x)=f^{(\alpha)}_a(g(x))g^{(\alpha)}_a(x)
(g(x)-a)^{\alpha-1}$;

\item[(v)] if $g(x) \le b$, then
${_b(f\circ g)^{(\alpha)}}(x)
={_bf^{(\alpha)}}(g(x)){_bg^{(\alpha)}}(x)(b-g(x))^{\alpha-1}$;

\item[(vi)] if $g(x) < a$, then
$(f\circ g)^{(\alpha)}_a(x)
=-{_af^{(\alpha)}}(g(x))g^{(\alpha)}_a(x)(a-g(x))^{\alpha-1}$;

\item[(vii)] if $g(x) > b$, then
${_b(f\circ g)^{(\alpha)}}(x)
=-f^{(\alpha)}_b(g(x)){_bg^{(\alpha)}}(x)(g(x)-b)^{\alpha-1}$.
\end{itemize}
\end{proposition}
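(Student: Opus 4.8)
\medskip
\noindent\textbf{Proof sketch.}
The plan is to dispatch the algebraic identities (i)--(iii) directly from the limit definitions \eqref{a1}--\eqref{a2}, and to reduce the chain rules (iv)--(vii) to the classical chain rule by means of the representations \eqref{a3}--\eqref{a4} of Remark~\ref{Ra1}. A convenient first observation is that, away from the base point, $\alpha$-differentiability is nothing but ordinary differentiability: for $x\ne a$ the substitution $h=\epsilon(x-a)^{1-\alpha}$ turns the quotient in \eqref{a1} into $(x-a)^{1-\alpha}\,\frac{f(x+h)-f(x)}{h}$, so the limit \eqref{a1} exists if and only if $f'(x)$ does, and then $f^{(\alpha)}_a(x)=(x-a)^{1-\alpha}f'(x)$ — and symmetrically for \eqref{a2}, \eqref{a4}, with $x\ne b$. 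In particular $\alpha$-differentiable functions are continuous at such points, which is what lets us pass limits through products and quotients below.

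For (i), inserting $c_1f+c_2g$ into \eqref{a1} and splitting the difference quotient gives the identity by linearity of the limit; the right-derivative statement is obtained by the same computation with $(x-a)^{1-\alpha}$ replaced by $(b-x)^{1-\alpha}$ and the overall minus sign carried along. For (ii), I would use the classical add-and-subtract trick: with $h_\epsilon=\epsilon(x-a)^{1-\alpha}$,
\[
\frac{(fg)(x+h_\epsilon)-(fg)(x)}{\epsilon}
=f(x+h_\epsilon)\,\frac{g(x+h_\epsilon)-g(x)}{\epsilon}+g(x)\,\frac{f(x+h_\epsilon)-f(x)}{\epsilon},
\]
and letting $\epsilon\to0$, with $f(x+h_\epsilon)\to f(x)$ by continuity, produce $f(x)g^{(\alpha)}_a(x)+g(x)f^{(\alpha)}_a(x)$. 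The quotient rule (iii) follows by the same argument once one notes that $g(x)\ne0$ forces $g(x+h_\epsilon)\ne0$ for small $\epsilon$; in all three cases the right-derivative versions only propagate the extra minus sign.

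For the chain rules I would invoke \eqref{a3}--\eqref{a4} on the relevant subinterval. For (iv), writing $(f\circ g)^{(\alpha)}_a(x)=(x-a)^{1-\alpha}(f\circ g)'(x)=(x-a)^{1-\alpha}f'(g(x))\,g'(x)$ and then, when $g(x)\ge a$, inverting \eqref{a3} at $g(x)$ and at $x$ to get $f'(g(x))=(g(x)-a)^{\alpha-1}f^{(\alpha)}_a(g(x))$ and $g'(x)=(x-a)^{\alpha-1}g^{(\alpha)}_a(x)$, substituting, and cancelling $(x-a)^{1-\alpha}(x-a)^{\alpha-1}=1$, yields (iv). If instead $g(x)<a$, then $g(x)$ lies strictly to the left of $a$ and $(g(x)-a)^{\alpha-1}$ is no longer available, so one uses \eqref{a4} with $b$ replaced by $a$, i.e. $f'(g(x))=-(a-g(x))^{\alpha-1}\,{_af^{(\alpha)}}(g(x))$, which gives (vi). Items (v) and (vii) are the mirror images: one starts from ${_b(f\circ g)^{(\alpha)}}(x)=-(b-x)^{1-\alpha}f'(g(x))\,g'(x)$, inverts \eqref{a4} for $g$, and inverts \eqref{a4} (if $g(x)\le b$) or \eqref{a3} (if $g(x)>b$) for $f$ at $g(x)$.

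The step I expect to demand the most care is the chain rule. One has to keep track of which power function is admissible in each of the four regimes $g(x)\ge a$, $g(x)<a$, $g(x)\le b$, $g(x)>b$ — which is precisely why the single composition rule is split into (iv)--(vii) — of the signs introduced by the minus signs in \eqref{a2} and \eqref{a4}, and of the degenerate cases in which $g(x)$ hits a base point. Away from those degeneracies the equivalence between $\alpha$-differentiability and ordinary differentiability recorded above makes the reduction to the classical chain rule rigorous, so the remaining work is purely the bookkeeping just described.
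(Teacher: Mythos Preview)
The paper does not actually prove Proposition~\ref{Ra2}; it is stated with a citation to \cite{Abd:15,MR3164103} and no argument is given, so there is nothing in the paper to compare your proof against. Your sketch is correct and is essentially the argument one finds in those references: parts (i)--(iii) follow from the limit definitions \eqref{a1}--\eqref{a2} by the standard add-and-subtract manipulations (using that $\alpha$-differentiability at an interior point implies continuity there), and parts (iv)--(vii) reduce to the ordinary chain rule via the key observation---which you justify cleanly with the substitution $h=\epsilon(x-a)^{1-\alpha}$---that away from the base point $\alpha$-differentiability is equivalent to classical differentiability with $f^{(\alpha)}_a(x)=(x-a)^{1-\alpha}f'(x)$, after which the four cases are exactly the bookkeeping of which power $(g(x)-a)^{\alpha-1}$, $(a-g(x))^{\alpha-1}$, $(b-g(x))^{\alpha-1}$, $(g(x)-b)^{\alpha-1}$ is the admissible one. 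Your caveat about the degenerate boundary case $g(x)=a$ (resp.\ $g(x)=b$) is well placed: the right-hand side of (iv) (resp.\ (v)) is then formally $0\cdot\infty$, and the statement as written in the references and here should really be read with a strict inequality or with the convention $f^{(\alpha)}_a(a)=\lim_{t\to a^+}f^{(\alpha)}_a(t)$ from Definition~\ref{Da1}.
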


The simple chain and product rules given in Proposition~\ref{Ra2}
justify the increasing interest in the study of the conformable
fractional calculus, since it enable us to investigate its potential
applications as a tool to practical modeling of complex problems
in science and engineering.

The conformable fractional integrals are defined
as follows \cite{Abd:15,MR3164103}.

\begin{definition}
\label{Da2}
The left conformable fractional integral of order $0< \alpha \leq 1$ starting
from $a \in \mathbb{R}$ of a function $f\in L^1[a,b]$ is defined by
\begin{equation}
\label{a5}
I^{\alpha}_a f(x)=\int_a^xf(u)d^\alpha_au
=\int_a^x\frac{f(u)}{(u-a)^{1-\alpha}}du
\end{equation}
and the right conformable fractional integral of order $0< \alpha \leq 1$
terminating at $b \in \mathbb{R}$ of a function $f\in L^1[a,b]$ is defined by
\begin{equation}
\label{a6}
{_bI^{\alpha}} f(x)=\int_x^bf(u){_bd^\alpha}u
=\int_x^b\frac{f(u)}{(b-u)^{1-\alpha}}du.
\end{equation}
\end{definition}

It is important to mention that the conformable fractional integrals \eqref{a5}
and \eqref{a6} differ from the traditional fractional Riemann--Liouville integrals
\cite{Kilbas,miller,Podlubny} only by a multiplicative constant. Moreover,
for $\alpha=1$, the conformable fractional integrals reduce
to ordinary first order integrals.

In addition to these definitions, in the present work we make use of the following
properties of conformable fractional derivatives and integrals.

\begin{theorem}
\label{Ta1}
Let $f\in C[a,b]$ and $0<\alpha \leq 1$. Then,
\begin{equation*}
\frac{d^{\alpha}_a}{dx^{\alpha}_a}I^{\alpha}_a f(x)=f(x)
\end{equation*}
and
\begin{equation*}
\frac{{_bd^{\alpha}}}{{_bdx^{\alpha}}}{_bI^{\alpha}} f(x)=f(x)
\end{equation*}
for all $x\in [a,b]$.
\end{theorem}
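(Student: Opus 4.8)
The plan is to reduce both identities to the elementary formulas of Remark~\ref{Ra1} together with the classical fundamental theorem of calculus, the case $\alpha=1$ being exactly that theorem. Consider first the left case and set $g(x) := I^{\alpha}_a f(x) = \int_a^x (u-a)^{\alpha-1} f(u)\,du$. Since $f$ is continuous on $[a,b]$ and $0<\alpha\leq 1$, the integrand $(u-a)^{\alpha-1}f(u)$ is continuous on $(a,b]$ and, because $1-\alpha<1$, absolutely integrable near $u=a$; hence $g$ is well defined on $[a,b]$, with $g(a)=0$ and $g(x)\to 0$ as $x\to a^+$, so $g\in C[a,b]$. Fixing any $x_0\in(a,b)$ and restricting attention to $[x_0,b]$, where the integrand is continuous, the fundamental theorem of calculus shows that $g$ is differentiable at every $x\in(a,b)$ with $g'(x)=(x-a)^{\alpha-1}f(x)$.

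Next I would note that identity \eqref{a3} of Remark~\ref{Ra1} is really a pointwise statement: whenever $g$ is differentiable at a point $x\in(a,b)$, the substitution $t=\epsilon(x-a)^{1-\alpha}$ in \eqref{a1} (so that $t\to 0$ as $\epsilon\to 0$, using $x>a$) gives
\[
g^{(\alpha)}_a(x) = \lim_{\epsilon\to 0}\frac{g(x+\epsilon(x-a)^{1-\alpha})-g(x)}{\epsilon} = (x-a)^{1-\alpha}\,g'(x).
\]
Combining this with the formula for $g'$ just obtained yields, for every $x\in(a,b)$,
\[
\frac{d^{\alpha}_a}{dx^{\alpha}_a}I^{\alpha}_a f(x) = (x-a)^{1-\alpha}(x-a)^{\alpha-1}f(x) = f(x).
\]
For the endpoints, the convention in Definition~\ref{Da1} and the continuity of $f$ give $\frac{d^{\alpha}_a}{dx^{\alpha}_a}I^{\alpha}_a f(a)=\lim_{x\to a^+}f(x)=f(a)$ and likewise at $b$, which establishes the first identity on all of $[a,b]$.

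The right-hand identity is entirely parallel: put $G(x):={_bI^{\alpha}}f(x)=\int_x^b (b-u)^{\alpha-1}f(u)\,du$, which by the same argument lies in $C[a,b]$ and satisfies $G'(x)=-(b-x)^{\alpha-1}f(x)$ for $x\in(a,b)$; the pointwise version of \eqref{a4}, ${_bG^{(\alpha)}}(x)=-(b-x)^{1-\alpha}G'(x)$, then gives $\frac{{_bd^{\alpha}}}{{_bdx^{\alpha}}}{_bI^{\alpha}}f(x)=f(x)$ on $(a,b)$, and the limiting convention extends it to $x=a$ and $x=b$. The only delicate point — and the single mild obstacle — is the behaviour at the singular endpoint of the conformable integral when $\alpha<1$: one must verify that $I^{\alpha}_a f$ (resp. ${_bI^{\alpha}}f$) remains continuous there and that $g$ (resp. $G$) only needs to be differentiated at interior points, so that the $C^1[a,b]$ hypothesis of Remark~\ref{Ra1} is never actually invoked on the closed interval.
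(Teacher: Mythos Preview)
Your proof is correct and follows exactly the route sketched in the paper, which merely remarks that the result ``follows directly from \eqref{a3}, \eqref{a4}, \eqref{a5} and \eqref{a6} since $I^{\alpha}_a f(x)$ and ${_bI^{\alpha}} f(x)$ are differentiable.'' You have simply supplied the details the paper omits --- the integrability of $(u-a)^{\alpha-1}f(u)$ near $u=a$, the pointwise (rather than $C^1[a,b]$) use of \eqref{a3}--\eqref{a4} on $(a,b)$, and the endpoint extension via the limiting convention in Definition~\ref{Da1}.
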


\begin{theorem}[Fundamental theorem of conformable fractional calculus]
\label{Ta2}
Let $f\in C^1[a,b]$ and $0<\alpha \leq 1$. Then,
\begin{equation*}
I^{\alpha}_af^{(\alpha)}_a(x)=f(x)-f(a)
\end{equation*}
and
\begin{equation*}
{_bI^{\alpha}}{_bf^{(\alpha)}}(x)=f(x)-f(b)
\end{equation*}
for all $x\in [a,b]$.
\end{theorem}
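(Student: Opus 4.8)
The plan is to reduce both identities to the classical fundamental theorem of calculus, exploiting the explicit formulas for the conformable derivatives of a $C^1$ function recorded in Remark~\ref{Ra1}. Since $f\in C^1[a,b]$, formula \eqref{a3} gives $f^{(\alpha)}_a(u)=(u-a)^{1-\alpha}f'(u)$ for $u\in(a,b]$; in particular $f^{(\alpha)}_a$ is continuous, hence in $L^1[a,b]$, so the left conformable integral in the statement is well defined. The only point requiring a word of care will be the behaviour of the integrand near the lower endpoint.

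For the left identity I would substitute \eqref{a3} into the definition \eqref{a5} of the left conformable integral:
\begin{equation*}
I^{\alpha}_a f^{(\alpha)}_a(x)=\int_a^x\frac{f^{(\alpha)}_a(u)}{(u-a)^{1-\alpha}}\,du
=\int_a^x\frac{(u-a)^{1-\alpha}f'(u)}{(u-a)^{1-\alpha}}\,du
=\int_a^x f'(u)\,du,
\end{equation*}
where the cancellation is valid for every $u\in(a,x]$ and the resulting integrand $f'$ is continuous on $[a,x]$, so the integral is a proper Riemann integral. The classical fundamental theorem of calculus then yields $I^{\alpha}_a f^{(\alpha)}_a(x)=f(x)-f(a)$.

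For the right identity the argument is symmetric, now using \eqref{a4}, namely ${_bf^{(\alpha)}}(u)=-(b-u)^{1-\alpha}f'(u)$, together with the definition \eqref{a6}:
\begin{equation*}
{_bI^{\alpha}}{_bf^{(\alpha)}}(x)=\int_x^b\frac{{_bf^{(\alpha)}}(u)}{(b-u)^{1-\alpha}}\,du
=-\int_x^b f'(u)\,du=-\bigl(f(b)-f(x)\bigr)=f(x)-f(b).
\end{equation*}
The case $\alpha=1$ is contained in the above, since then the weights are identically $1$ and \eqref{a3}--\eqref{a4} reduce to $f^{(\alpha)}_a=f'$ and ${_bf^{(\alpha)}}=-f'$; no separate treatment is needed. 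The only genuine subtlety is the legitimacy of cancelling the factor $(u-a)^{1-\alpha}$ (respectively $(b-u)^{1-\alpha}$) at the endpoint where it vanishes when $\alpha<1$; this is harmless because after cancellation the integrand extends continuously to the closed interval, so there is in fact no improper-integral issue to address. In short, once Remark~\ref{Ra1} is in hand the statement is just the ordinary fundamental theorem of calculus in disguise, and the ``hard part'' is merely this piece of bookkeeping.
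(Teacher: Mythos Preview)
Your proof is correct and follows exactly the route the paper indicates: the paper does not spell out a detailed argument but states that Theorem~\ref{Ta2} ``is a direct consequence of \eqref{a3}, \eqref{a4} and definitions \eqref{a5} and \eqref{a6},'' which is precisely the substitution-and-cancellation you carry out, followed by the classical fundamental theorem of calculus. Your additional remarks about integrability and the endpoint cancellation are a welcome clarification that the paper omits.
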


\begin{theorem}[Integration by parts]
\label{Ta3}
Let $f,g:[a,b]\rightarrow \mathbb{R}$ be two functions
such that $fg$ is differentiable. Then,
\begin{equation}
\label{a11}
\int_a^b f(x) g^{(\alpha)}_a(x)d^{\alpha}_ax
=f(x)g(x)|_a^b - \int_a^b g(x) f^{(\alpha)}_a(x)d^{\alpha}_ax,
\end{equation}
\begin{equation}
\label{a12}
\int_a^b f(x) {_bg^{(\alpha)}}(x){_bd^{\alpha}}x
=-f(x)g(x)|_a^b - \int_a^b g(x) {_bf^{(\alpha)}}(x){_bd^{\alpha}}x,
\end{equation}
and, if $f,g:[a,b]\rightarrow \mathbb{R}$ are differentiable functions, then
\begin{equation*}
\int_a^b f(x) g^{(\alpha)}_a(x)d^{\alpha}_ax
=f(x)g(x)|_a^b + \int_a^b g(x) {_bf^{(\alpha)}}(x){_bd^{\alpha}}x.
\end{equation*}
\end{theorem}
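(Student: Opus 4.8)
The plan is to pull everything back to the classical setting. By Remark~\ref{Ra1} the conformable derivatives are just the ordinary ones weighted by $(x-a)^{1-\alpha}$ or $-(b-x)^{1-\alpha}$, and by Definition~\ref{Da2} the conformable differentials $d^{\alpha}_ax$ and ${_bd^{\alpha}}x$ are precisely the ordinary $dx$ divided by those same weights; hence in every integrand the weights cancel and only the usual integration-by-parts formula remains to be used. Concretely, for \eqref{a11} I would use \eqref{a3} and \eqref{a5} to write
$$f(x)g^{(\alpha)}_a(x)\,d^{\alpha}_ax=f(x)(x-a)^{1-\alpha}g'(x)\,\frac{dx}{(x-a)^{1-\alpha}}=f(x)g'(x)\,dx,$$
so that $\int_a^b f(x)g^{(\alpha)}_a(x)\,d^{\alpha}_ax=\int_a^b f(x)g'(x)\,dx=f(x)g(x)|_a^b-\int_a^b f'(x)g(x)\,dx$, and then the same two identities, read backwards, turn $\int_a^b f'(x)g(x)\,dx$ into $\int_a^b g(x)f^{(\alpha)}_a(x)\,d^{\alpha}_ax$, which is exactly \eqref{a11}. (Equivalently and more intrinsically: by the product rule, Proposition~\ref{Ra2}(ii), $fg^{(\alpha)}_a+gf^{(\alpha)}_a=(fg)^{(\alpha)}_a$, and integrating this identity while using $\int_a^b(fg)^{(\alpha)}_a\,d^{\alpha}_ax=\int_a^b(fg)'\,dx=f(x)g(x)|_a^b$ gives the claim at once.)

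The other two identities are obtained in the same way, keeping track of the sign in \eqref{a4}. Using \eqref{a4} and \eqref{a6} one has $f(x)\,{_bg^{(\alpha)}}(x)\,{_bd^{\alpha}}x=-f(x)g'(x)\,dx$ and $g(x)\,{_bf^{(\alpha)}}(x)\,{_bd^{\alpha}}x=-g(x)f'(x)\,dx$, so the left side of \eqref{a12} equals $-\int_a^b fg'\,dx=-f(x)g(x)|_a^b+\int_a^b f'g\,dx=-f(x)g(x)|_a^b-\int_a^b g(x)\,{_bf^{(\alpha)}}(x)\,{_bd^{\alpha}}x$, which is \eqref{a12}. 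For the mixed identity I would again start from $\int_a^b f(x)g^{(\alpha)}_a(x)\,d^{\alpha}_ax=\int_a^b fg'\,dx=f(x)g(x)|_a^b-\int_a^b f'g\,dx$, but convert $-\int_a^b f'(x)g(x)\,dx$ via \eqref{a4}--\eqref{a6} into $\int_a^b g(x)\,{_bf^{(\alpha)}}(x)\,{_bd^{\alpha}}x$, giving the third formula.

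The only subtle point is the hypotheses: Remark~\ref{Ra1} is stated for $C^1$ functions, whereas here we are told merely that $fg$ is differentiable (with the individual $\alpha$-derivatives existing, so that the integrands are defined). To handle this I would note that on the open interval $(a,b)$ the weights $(x-a)^{1-\alpha}$ and $(b-x)^{1-\alpha}$ are positive and finite, so $\alpha$-differentiability of $f$ and $g$ there is equivalent to ordinary differentiability and yields \eqref{a3}--\eqref{a4} pointwise, while the values at the two endpoints do not affect the integrals; the only genuinely analytic input is then the classical fundamental-theorem/integration-by-parts statement applied to the differentiable product $fg$. For $\alpha=1$ all three formulas reduce literally to the classical integration-by-parts rule, so it suffices to treat $0<\alpha<1$, which is exactly the range covered by Proposition~\ref{Ra2}. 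In short, there is no real obstacle here beyond this regularity bookkeeping.
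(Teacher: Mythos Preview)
Your proposal is correct and matches the paper's own argument, which is only a one-line sketch: the paper simply states that \eqref{a11} and \eqref{a12} ``follow from Proposition~\ref{Ra2} and Theorem~\ref{Ta1}'' (i.e., the product rule plus the fundamental theorem), which is exactly your parenthetical ``intrinsic'' version, while your primary write-up unpacks this via \eqref{a3}--\eqref{a6} into the classical integration-by-parts formula. Your attention to the regularity hypothesis (that only $fg$ is assumed differentiable, with pointwise $\alpha$-differentiability on $(a,b)$ sufficing for \eqref{a3}--\eqref{a4}) is more careful than the paper itself, which does not comment on this point.
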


The proof of Theorem~\ref{Ta1} follows directly from \eqref{a3}, \eqref{a4},
\eqref{a5} and \eqref{a6} since $I^{\alpha}_a f(x)$ and ${_bI^{\alpha}} f(x)$
are differentiable. On the other hand, the fundamental theorem of the conformable
fractional calculus (Theorem~\ref{Ta2}) is a direct consequence of \eqref{a3},
\eqref{a4} and definitions \eqref{a5}
and \eqref{a6} since $f,g:[a,b]\rightarrow \mathbb{R}$
are differentiable functions. Finally, the integration by parts \eqref{a11}
and \eqref{a12} follow from Proposition~\ref{Ra2} and Theorem~\ref{Ta1}.
We also need the following result.

\begin{theorem}[Chain rule for functions of several variables]
Let $f:\mathbb{R}^N \to \mathbb{R}$ ($N\in \mathbb{N}$) be a
differentiable function in all its arguments and $y_1,\ldots,y_N:\mathbb{R}
\to \mathbb{R}$ be $\alpha$-differentiable functions. Then,
\begin{equation}
\label{a14}
\frac{d^{\alpha}_a}{dx^{\alpha}_a} f(y_1(x),\ldots,y_N(x))
=\frac{\partial f}{\partial y_1}{y_1}^{(\alpha)}_a
+\frac{\partial f}{\partial y_2}{y_2}^{(\alpha)}_a
+\cdots + \frac{\partial f}{\partial y_N}{y_N}^{(\alpha)}_a
\end{equation}
and
\begin{equation}
\label{a15}
\frac{{_bd^{\alpha}}}{{_bdx^{\alpha}}} f(y_1(x),\ldots,y_N(x))
=\frac{\partial f}{\partial y_1}{_by_1}^{(\alpha)}
+\frac{\partial f}{\partial y_2}{_by_2}^{(\alpha)}
+\cdots + \frac{\partial f}{\partial y_N}{_by_N}^{(\alpha)}.
\end{equation}
\end{theorem}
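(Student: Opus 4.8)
The plan is to reduce the conformable chain rule to the classical multivariable chain rule, using Remark~\ref{Ra1} in its pointwise form. The underlying observation is that at an interior point $x\in(a,b)$ the notion of $\alpha$-differentiability coincides with ordinary differentiability. Indeed, in the limit \eqref{a1} substitute $h=\epsilon(x-a)^{1-\alpha}$; since $x>a$, the number $(x-a)^{1-\alpha}$ is a fixed positive constant, so $\epsilon\to0$ if and only if $h\to0$, one has $\epsilon=(x-a)^{\alpha-1}h$, and therefore
\begin{equation*}
{y_i}^{(\alpha)}_a(x)=(x-a)^{1-\alpha}\lim_{h\to0}\frac{y_i(x+h)-y_i(x)}{h},
\end{equation*}
the limit on the right existing precisely because the limit in \eqref{a1} does. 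Hence each $y_i$ is ordinarily differentiable at $x$, with $y_i'(x)=(x-a)^{\alpha-1}{y_i}^{(\alpha)}_a(x)$; this is the pointwise counterpart of \eqref{a3}.

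First I would fix $x\in(a,b)$ and set $g(x):=f(y_1(x),\dots,y_N(x))$. Since $f$ is (totally) differentiable in all its arguments and each $y_i$ is differentiable at $x$ by the previous step, the usual chain rule for functions of several variables applies and gives
\begin{equation*}
g'(x)=\sum_{i=1}^{N}\frac{\partial f}{\partial y_i}\bigl(y_1(x),\dots,y_N(x)\bigr)\,y_i'(x).
\end{equation*}
Multiplying both sides by $(x-a)^{1-\alpha}$ and invoking \eqref{a3} once for $g$ and once for each $y_i$ yields
\begin{equation*}
g^{(\alpha)}_a(x)=(x-a)^{1-\alpha}g'(x)=\sum_{i=1}^{N}\frac{\partial f}{\partial y_i}\,(x-a)^{1-\alpha}y_i'(x)=\sum_{i=1}^{N}\frac{\partial f}{\partial y_i}\,{y_i}^{(\alpha)}_a(x),
\end{equation*}
which is \eqref{a14} for every $x\in(a,b)$. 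The endpoints $x=a$ and $x=b$ are then covered by passing to the one-sided limits, exactly as stipulated in Definition~\ref{Da1}.

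The identity \eqref{a15} follows by the same scheme applied to the right derivative: for $x\in(a,b)$ one puts $h=\epsilon(b-x)^{1-\alpha}$ in \eqref{a2}, obtains ${_by_i}^{(\alpha)}(x)=-(b-x)^{1-\alpha}y_i'(x)$ (the pointwise version of \eqref{a4}), applies the classical chain rule to $g$, multiplies through by $-(b-x)^{1-\alpha}$, and handles $x=a,b$ via Definition~\ref{Da1}. I do not expect a genuine obstacle: the only points demanding mild care are the reading of ``differentiable in all its arguments'' as total differentiability (which is exactly the hypothesis the classical chain rule requires, since the $y_i$ are not assumed $C^1$), and the behaviour of the weights $(x-a)^{1-\alpha}$ and $(b-x)^{1-\alpha}$ at the boundary, where they degenerate and one must fall back on the limiting convention of Definition~\ref{Da1}.
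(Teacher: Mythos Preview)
Your proof is correct but follows a different path from the paper's. The paper works directly from the limit definition \eqref{a1}: for $N=2$ it writes out the difference quotient for $f(y_1(x+\epsilon(x-a)^{1-\alpha}),y_2(x+\epsilon(x-a)^{1-\alpha}))-f(y_1(x),y_2(x))$, inserts and subtracts the mixed term $f(y_1(x),y_2(x+\epsilon(x-a)^{1-\alpha}))$, and factors each resulting piece as a quotient in $y_i$ times $(y_i(x+\epsilon(x-a)^{1-\alpha})-y_i(x))/\epsilon$, so that in the limit one reads off $\partial f/\partial y_i$ and ${y_i}^{(\alpha)}_a$ directly. Your route instead observes that $\alpha$-differentiability at an interior point is equivalent to ordinary differentiability, invokes the classical multivariable chain rule once, and then rescales by $(x-a)^{1-\alpha}$. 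Your argument is arguably cleaner: it handles general $N$ uniformly, it makes explicit why \emph{total} differentiability of $f$ is the right hypothesis, and it sidesteps the usual caveat in the telescoping argument about dividing by increments $y_i(x+\epsilon(x-a)^{1-\alpha})-y_i(x)$ that may vanish. The paper's approach, on the other hand, is self-contained within the conformable framework and does not appeal to the classical chain rule as a black box.
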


\begin{proof}
For simplicity, we prove \eqref{a14} only for $N=2$. The proofs for a general
$N$ and of \eqref{a15} are similar. From \eqref{a1}
we have for $N=2$ that
\begin{equation*}
\begin{split}
\frac{d^{\alpha}_a}{dx^{\alpha}_a} & f(y_1(x),y_2(x))\\
&=\lim_{\epsilon \rightarrow 0} \frac{f(y_1(x+\epsilon(x-a)^{1-\alpha}),
y_2(x+\epsilon(x-a)^{1-\alpha}))-f(y_1(x),y_2(x))}{\epsilon}\\
&=\lim_{\epsilon \rightarrow 0} \frac{f(y_1(x+\epsilon(x-a)^{1-\alpha}),
y_2(x+\epsilon(x-a)^{1-\alpha}))-f(y_1(x),y_2(x+\epsilon(x-a)^{1-\alpha}))}{y_1(x
\quad +\epsilon(x-a)^{1-\alpha})-y_1(x)}\\
&\quad\times\frac{y_1(x+\epsilon(x-a)^{1-\alpha})-y_1(x)}{\epsilon}\\
&\quad+\lim_{\epsilon \rightarrow 0} \frac{f(y_1(x),y_2(x+\epsilon(x-a)^{1-\alpha}))
-f(y_1(x),y_2(x))}{y_2(x+\epsilon(x-a)^{1-\alpha})-y_2(x)}
\, \frac{y_2(x+\epsilon(x-a)^{1-\alpha})-y_2(x)}{\epsilon}\\
&=\frac{\partial f}{\partial y_1}{y_1}^{(\alpha)}_a
+\frac{\partial f}{\partial y_2}{y_2}^{(\alpha)}_a,
\end{split}
\end{equation*}
since $f$ is differentiable.
\end{proof}

% ----------------------------------------

\section{The conformable fractional Euler--Lagrange equation}
\label{sec:EL}

Let us consider first the fractional variational integral
\begin{equation}
\label{funct}
\mathcal{J}(y)=\int_a^b L\left(x,y(x),y^{(\alpha)}_a(x)\right) \,d^\alpha_ax
\end{equation}
defined on the set of continuous functions $y:[a,b]\to\mathbb R$ such that
$y^{(\alpha)}_a$ exists on $[a,b]$, where the Lagrangian
$L=L(x,y,y^{(\alpha)}_a):[a,b]\times\mathbb{R}^2 \to\mathbb R$ is of class
$C^1$ in each of its arguments. The fundamental problem of the calculus
of variations consists in finding which functions extremize functional
\eqref{funct}. In order to obtain a necessary condition for the extremum
of \eqref{funct} we need the following Lemma.

\begin{lemma}[Fundamental Lemma for conformable calculus of variation]
\label{Flemma}
Let $M$ and $\eta$ be continuous function on $[a,b]$. If
\begin{equation}
\label{Flemma1}
\int_a^b \eta(x)M(x)d^{\alpha}_a x=0
\end{equation}
for any $\eta \in C[a,b]$ with $\eta(a)=\eta(b)=0$, then
\begin{equation}
\label{Flemma2}
M(x)=0
\end{equation}
for all $x \in [a,b]$.
\end{lemma}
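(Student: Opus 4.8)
The statement is the conformable analogue of the classical fundamental lemma of the calculus of variations, so the natural strategy is the usual contradiction argument via a localized bump test function, with attention paid to the weight $(x-a)^{1-\alpha}$ that appears once we rewrite the conformable integral \eqref{Flemma1} in ordinary form using \eqref{a5}:
\begin{equation*}
\int_a^b \eta(x) M(x)\, d^\alpha_a x = \int_a^b \frac{\eta(x) M(x)}{(x-a)^{1-\alpha}}\, dx .
\end{equation*}
The key observation is that this weight is continuous and \emph{strictly positive} on every compact subinterval of $(a,b]$, so it plays no essential role as long as the support of the chosen $\eta$ is kept away from the left endpoint $a$.

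First I would argue by contradiction, assuming $M(x_0)\neq 0$ for some $x_0\in[a,b]$. A preliminary reduction disposes of the endpoints: if $M(a)\neq 0$ (resp. $M(b)\neq 0$), continuity of $M$ forces $M$ to be nonzero on a one-sided neighborhood of $a$ (resp. $b$), hence nonzero at some point of the open interval $(a,b)$; so we may assume $x_0\in(a,b)$, and, replacing $M$ by $-M$ if necessary (which does not affect the hypothesis), that $M(x_0)>0$. By continuity of $M$ at $x_0$ there exist $c,d$ with $a<c<x_0<d<b$ and $M(x)\geq M(x_0)/2>0$ for all $x\in[c,d]$.

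Next I would exhibit an explicit admissible test function concentrated on $[c,d]$, for instance
\begin{equation*}
\eta(x) =
\begin{cases}
(x-c)^2 (d-x)^2, & x\in[c,d],\\[2pt]
0, & x\in[a,b]\setminus[c,d],
\end{cases}
\end{equation*}
which is continuous on $[a,b]$ (indeed $C^1$), satisfies $\eta(a)=\eta(b)=0$, and is strictly positive on $(c,d)$. Substituting into the rewritten integral gives
\begin{equation*}
\int_a^b \eta(x) M(x)\, d^\alpha_a x
= \int_c^d \frac{\eta(x) M(x)}{(x-a)^{1-\alpha}}\, dx .
\end{equation*}
On $[c,d]$ the denominator $(x-a)^{1-\alpha}$ is continuous and bounded away from $0$ (since $c>a$), $\eta>0$ on $(c,d)$, and $M\geq M(x_0)/2>0$; hence the integrand is nonnegative on $[c,d]$ and strictly positive on $(c,d)$, so the integral is strictly positive. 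This contradicts \eqref{Flemma1}. Therefore $M(x)=0$ for all $x\in(a,b)$, and continuity of $M$ on $[a,b]$ extends this to the endpoints, giving \eqref{Flemma2}.

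\textbf{Expected main obstacle.} There is essentially no deep difficulty; the only point requiring care is the singular behaviour of the reciprocal weight $1/(x-a)^{1-\alpha}$ as $x\to a^+$ when $\alpha<1$. This is handled cleanly by insisting $c>a$, so that the weight stays within positive constant bounds on the support of $\eta$; alternatively one could note that the singularity is integrable, $\int_a^{d}(x-a)^{\alpha-1}\,dx=(d-a)^{\alpha}/\alpha<\infty$, but localizing away from $a$ avoids even that remark.
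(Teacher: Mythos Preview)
Your proposal is correct and follows essentially the same contradiction-via-bump-function argument as the paper: rewrite the conformable integral with the weight $(x-a)^{\alpha-1}$, assume $M(x_0)>0$ at some interior point, localize with the quartic bump $(x-c)^2(d-x)^2$ supported in $(a,b)$, and obtain a strictly positive integral. Your version is in fact slightly more careful than the paper's in two respects: you explicitly reduce the endpoint cases to interior ones by continuity, and you spell out why keeping $c>a$ makes the weight harmless on the support of $\eta$.
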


\begin{proof}
We do the proof by contradiction. 
From \eqref{Flemma1} we have that
\begin{equation}
\label{Fproof1}
\int_a^b \eta(x)M(x)d^{\alpha}_a x
=\int_a^b \eta(x)\frac{M(x)}{(x-a)^{1-\alpha}}dx=0.
\end{equation}
Suppose that there exist an $x_0 \in (a,b)$ such that $M(x_0)\neq 0$. 
Without loss of generality, let us assume that $M(x_0)>0$. 
Since $M$ is continuous on $[a,b]$, there exists 
a neighborhood $N^{\delta}(x_0) \subset (a,b)$ such that
\begin{equation*}
M(x)>0 \;\;\; \mbox{for all} \;\;\; x\in N^{\delta}(x_0).
\end{equation*}
Let us choose
\begin{equation}
\label{Fproof2}
\eta(x)=\left\{\begin{array}{cc}
(x-x_0-\delta)^2(x-x_0+\delta)^2 & x\in N^{\delta}(x_0) \\
0 & x\notin N^{\delta}(x_0).
\end{array} \right.
\end{equation}
Clearly, $\eta(x)$ given by \eqref{Fproof2} is continuous 
and satisfy $\eta(a)=\eta(b)=0$. Inserting \eqref{Fproof2} 
into \eqref{Fproof1} we obtain
\begin{equation*}
\int_a^b \eta(x)M(x)d^{\alpha}_a x
=\int_{x_0-\delta}^{x_0+\delta} 
(x-x_0-\delta)^2(x-x_0+\delta)^2\frac{M(x)}{(x-a)^{1-\alpha}}dx>0,
\end{equation*}
which contradicts our hypothesis. Thus,
\begin{equation*}
\frac{M(x)}{(x-a)^{1-\alpha}}>0 \;\;\; \mbox{for all} \;\;\; x\in(a,b).
\end{equation*}
Since $(x-a)^{1-\alpha}>0$ for $x\in(a,b)$, and since $M\in C[a,b]$, we get
\begin{equation*}
M(x)=0\;\;\; \mbox{ for all } \;\;\; x\in[a,b].
\end{equation*}
The proof is complete.
\end{proof}

\begin{theorem}[The conformable fractional Euler--Lagrange equation]
\label{Tb1}
Let $\mathcal{J}$ be a functional of form \eqref{funct}
with $L\in C^1\left([a,b]\times \mathbb{R}^2\right)$,
and $0<\alpha \leq 1$. Let $y:[a,b]\rightarrow \mathbb{R}$
be a $\alpha$-differentiable function with $y(a)=y_a$ and $y(b)=y_b$,
$y_a,y_b\in \mathbb{R}$. Furthermore, let
$y \frac{\partial L}{\partial y^{(\alpha)}_a}$ be a differentiable function,
and $\frac{\partial L}{\partial y^{(\alpha)}_a}$ be $\alpha$-differentiable.
If $y$ is an extremizer of $\mathcal{J}$, then $y$ satisfies the following
fractional Euler--Lagrange equation:
\begin{equation}
\label{FracELEquation}
\frac{\partial L}{\partial y}
-\frac{d^\alpha_a}{dx^\alpha_a}\left(\frac{\partial L}{\partial y^{(\alpha)}_a}\right)=0.
\end{equation}
\end{theorem}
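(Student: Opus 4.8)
The plan is to follow the classical variational argument, adapted to the conformable setting using the tools assembled in Section~\ref{sec:Preli}. First I would introduce a variation: let $y$ be the claimed extremizer and let $\eta\in C[a,b]$ be an arbitrary function that is $\alpha$-differentiable and satisfies $\eta(a)=\eta(b)=0$; set $\bar y(x)=y(x)+\varepsilon\eta(x)$ for $\varepsilon\in\mathbb R$ near $0$. By linearity of the conformable derivative (Proposition~\ref{Ra2}(i)), $\bar y^{(\alpha)}_a=y^{(\alpha)}_a+\varepsilon\eta^{(\alpha)}_a$, so the value $\phi(\varepsilon):=\mathcal J(y+\varepsilon\eta)$ is a well-defined real function of $\varepsilon$. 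Since $y$ is an extremizer, $\phi$ has an extremum at $\varepsilon=0$, hence $\phi'(0)=0$.

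Next I would compute $\phi'(0)$. Differentiating under the conformable integral sign and using the chain rule for functions of several variables,
\begin{equation*}
\phi'(0)=\int_a^b\left(\frac{\partial L}{\partial y}\,\eta(x)
+\frac{\partial L}{\partial y^{(\alpha)}_a}\,\eta^{(\alpha)}_a(x)\right)d^{\alpha}_ax=0,
\end{equation*}
where the partials of $L$ are evaluated along $\bigl(x,y(x),y^{(\alpha)}_a(x)\bigr)$. The key step is to remove the derivative from $\eta$ in the second term via the conformable integration-by-parts formula \eqref{a11} of Theorem~\ref{Ta3}, which is legitimate precisely because the hypotheses stipulate that $y\,\tfrac{\partial L}{\partial y^{(\alpha)}_a}$ is differentiable and $\tfrac{\partial L}{\partial y^{(\alpha)}_a}$ is $\alpha$-differentiable; one should note that integration by parts as stated applies to $f\,g^{(\alpha)}_a$ with $fg$ differentiable, and here the roles are $f=\tfrac{\partial L}{\partial y^{(\alpha)}_a}$, $g=\eta$, so $fg$ differentiable follows from the stated smoothness together with differentiability of $\eta$ (one may restrict $\eta$ to this class without loss, as smooth bump functions suffice). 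This yields
\begin{equation*}
\int_a^b\frac{\partial L}{\partial y^{(\alpha)}_a}\,\eta^{(\alpha)}_a\,d^{\alpha}_ax
=\left.\frac{\partial L}{\partial y^{(\alpha)}_a}\,\eta\right|_a^b
-\int_a^b\eta(x)\,\frac{d^{\alpha}_a}{dx^{\alpha}_a}\!\left(\frac{\partial L}{\partial y^{(\alpha)}_a}\right)d^{\alpha}_ax,
\end{equation*}
and the boundary term vanishes since $\eta(a)=\eta(b)=0$.

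Combining the two pieces gives $\int_a^b\eta(x)\,M(x)\,d^{\alpha}_ax=0$ for all admissible $\eta$, where $M(x)=\tfrac{\partial L}{\partial y}-\tfrac{d^{\alpha}_a}{dx^{\alpha}_a}\bigl(\tfrac{\partial L}{\partial y^{(\alpha)}_a}\bigr)$. Finally I would invoke the Fundamental Lemma (Lemma~\ref{Flemma}): $M$ is continuous on $[a,b]$ (it is a composition of the $C^1$ data of $L$ with continuous functions, minus a conformable derivative that exists by hypothesis and is continuous as the $\alpha$-derivative of a $C^1$-type quantity), so $M\equiv0$ on $[a,b]$, which is exactly \eqref{FracELEquation}. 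The main obstacle, and the point that requires care rather than cleverness, is justifying the interchange of differentiation and integration in computing $\phi'(0)$ and verifying that the regularity hypotheses on $L$, $y$, and $\tfrac{\partial L}{\partial y^{(\alpha)}_a}$ are exactly what is needed to license the conformable integration by parts and to guarantee $M\in C[a,b]$ so that Lemma~\ref{Flemma} applies.
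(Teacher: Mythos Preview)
Your proposal is correct and follows essentially the same route as the paper: introduce a one-parameter family of admissible variations $y+\varepsilon\eta$ with $\eta(a)=\eta(b)=0$, compute the first variation (Gateaux derivative), apply the conformable integration-by-parts formula \eqref{a11} to shift the $\alpha$-derivative off $\eta$, and invoke Lemma~\ref{Flemma}. The paper imposes directly the admissibility condition $\eta\,\tfrac{\partial L}{\partial y^{(\alpha)}_a}\in C^1$ on the variations rather than deducing it, but otherwise the arguments coincide.
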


\begin{proof}
Let $y^{\ast}$ give an extremum to \eqref{funct}. We define a family of functions
\begin{equation}
\label{proof1}
y(x)=y^{\ast}(x)+\epsilon \eta(x),
\end{equation}
where $\epsilon$ is a constant and $\eta$ is an arbitrary $\alpha$-differentiable
function satisfying $\eta\frac{\partial L}{\partial {y^{\ast}}^{(\alpha)}_a}
\in C^1$ and the boundary conditions $\eta(a)=\eta(b)=0$ (weak variations).
From \eqref{proof1}, the boundary conditions $\eta(a)=\eta(b)=0$, and the fact
that $y^{\ast}(a)=y_a$ and $y^{\ast}(b)=y_b$, it follows that function $y$
is admissible: $y$ is $\alpha$-differentiable with $y(a)=y_a$, $y(b)=y_b$,
and $y\frac{\partial L}{\partial {y^{\ast}}^{(\alpha)}_a}$ is differentiable.
Let the Lagrangian $L$ be $C^1([a,b] \times \mathbb{R}^2)$.
Because $y^{\ast}$ is an extremizer of functional $\mathcal{J}$, the Gateaux
derivative $\delta \mathcal{J}(y^{\ast})$ needs to be identically null.
For the functional \eqref{funct},
\begin{equation*}
\label{b4}
\begin{split}
\delta \mathcal{J}(y^{\ast})&=\lim_{\epsilon\rightarrow 0}
\frac{1}{\epsilon}\left( \int_a^b L\left(x,y,y^{(\alpha)}_a\right) \,d^\alpha_ax
-\int_a^b L\left(x,y^\ast,{y^\ast}^{(\alpha)}_a\right) \,d^\alpha_ax\right)\\
&=\int_a^b \left(\eta(x)\frac{\partial
L\left(x,y^\ast,{y^\ast}^{(\alpha)}_a\right)}{\partial y^{\ast}}
+\eta^{(\alpha)}_a(x)\frac{\partial
L\left(x,y^\ast,{y^\ast}^{(\alpha)}_a\right)}{\partial {y^\ast}^{(\alpha)}_a} \right)
d^\alpha_ax=0.
\end{split}
\end{equation*}
Using the integration by parts formula \eqref{a11} 
($\eta\frac{\partial L}{\partial {y^\ast}^{(\alpha)}_a}$ 
is differentiable) we get
\begin{equation}
\label{b4b}
\begin{split}
\delta \mathcal{J}(y^{\ast})
&= \int_a^b \eta(x)\left(\frac{\partial
L\left(x,y^\ast,{y^\ast}^{(\alpha)}_a\right)}{\partial y^{\ast}}
-\frac{d^\alpha_a}{dx^\alpha_a}\frac{\partial
L\left(x,y^\ast,{y^\ast}^{(\alpha)}_a\right)}{\partial
{y^\ast}^{(\alpha)}_a} \right)d^\alpha_ax=0,
\end{split}
\end{equation}
since $\eta(a)=\eta(b)=0$. The fractional Euler--Lagrange equation
\eqref{FracELEquation} follows from \eqref{b4b} by using the
fundamental Lemma \ref{Flemma}.
\end{proof}

\begin{definition}
A continuous function $y$ solution of \eqref{FracELEquation}
is said to be an extremal of \eqref{funct}.
\end{definition}

\begin{remark}
For $\alpha=1$, the functional $\mathcal{J}$ given by \eqref{funct}
reduces to the classical variational functional
$$
\mathcal{J}(y)=\int_0^1 L\left(x,y(x),y'(x)\right) dx
$$
and the associated Euler--Lagrange equation \eqref{FracELEquation} is
\begin{equation}
\label{ELEquation}
\frac{\partial L}{\partial y}
-\frac{d}{dx}\left(\frac{\partial L}{\partial y'}\right)=0.
\end{equation}
\end{remark}

Let us consider now the more general case where the Lagrangian 
depends on both integer order and fractional order derivatives. 
In this case the following theorem holds.

\begin{theorem}[The generalized conformable fractional Euler--Lagrange equation]
\label{Tc1}
Let $\mathcal{J}$ be a functional of form
\begin{equation}
\label{GJ}
\mathcal{J}(y)=\int_a^b L\left(x,y(x),y'(x),y^{(\alpha)}_a(x)\right)dx,
\end{equation}
with $L\in C^1\left([a,b]\times \mathbb{R}^3\right)$,
and $0<\alpha \leq 1$. Let $y:[a,b]\rightarrow \mathbb{R}$
be a differentiable function with $y(a)=y_a$ and $y(b)=y_b$,
$y_a,y_b\in \mathbb{R}$. If $y$ is an extremizer of $\mathcal{J}$,
then $y$ satisfies the following fractional Euler--Lagrange equation:
\begin{equation}
\label{GEL}
\frac{\partial L}{\partial y}-\frac{d}{dx}\left(\frac{\partial L}{\partial y'}\right)
-\frac{1}{(x-a)^{1-\alpha}}\frac{d^\alpha_a}{dx^\alpha_a}
\left(\frac{\partial \tilde{L}}{\partial y^{(\alpha)}_a}\right)=0,
\end{equation}
where $\tilde{L}\left(x,y,y',y^{(\alpha)}_a\right)
=(x-a)^{1-\alpha}L\left(x,y,y',y^{(\alpha)}_a\right)$.
\end{theorem}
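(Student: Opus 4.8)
The plan is to imitate the proof of Theorem~\ref{Tb1}, reducing the mixed integer/fractional problem to a purely classical one by absorbing the weight $(x-a)^{1-\alpha}$ into the Lagrangian. Suppose $y^{\ast}$ is an extremizer and consider the family $y = y^{\ast} + \epsilon\eta$, where $\eta \in C^1[a,b]$ satisfies $\eta(a) = \eta(b) = 0$; since $y^{\ast}(a)=y_a$ and $y^{\ast}(b)=y_b$, every such $y$ is admissible. As $L\in C^1$, we may differentiate under the (ordinary) integral sign, so the vanishing of the Gateaux derivative at $y^{\ast}$ gives
\[
\delta\mathcal{J}(y^{\ast})=\int_a^b\left(\eta\,\frac{\partial L}{\partial y}+\eta'\,\frac{\partial L}{\partial y'}+\eta^{(\alpha)}_a\,\frac{\partial L}{\partial y^{(\alpha)}_a}\right)dx=0,
\]
all partial derivatives being evaluated along $\bigl(x,y^{\ast}(x),{y^{\ast}}'(x),{y^{\ast}}^{(\alpha)}_a(x)\bigr)$.

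The key step is to rewrite the conformable term. By Remark~\ref{Ra1}, since $\eta\in C^1$ we have $\eta^{(\alpha)}_a(x)=(x-a)^{1-\alpha}\eta'(x)$, hence
\[
\int_a^b\eta^{(\alpha)}_a(x)\,\frac{\partial L}{\partial y^{(\alpha)}_a}\,dx=\int_a^b\eta'(x)\,(x-a)^{1-\alpha}\frac{\partial L}{\partial y^{(\alpha)}_a}\,dx=\int_a^b\eta'(x)\,\frac{\partial\tilde L}{\partial y^{(\alpha)}_a}\,dx,
\]
where the last equality uses that the factor $(x-a)^{1-\alpha}$ in $\tilde L=(x-a)^{1-\alpha}L$ is independent of $y^{(\alpha)}_a$. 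Now both $\eta'$-terms are treated by ordinary integration by parts, the boundary contributions $\eta\,\partial L/\partial y'\bigl|_a^b$ and $\eta\,\partial\tilde L/\partial y^{(\alpha)}_a\bigl|_a^b$ vanishing since $\eta(a)=\eta(b)=0$. This produces
\[
\int_a^b\eta(x)\left(\frac{\partial L}{\partial y}-\frac{d}{dx}\frac{\partial L}{\partial y'}-\frac{d}{dx}\frac{\partial\tilde L}{\partial y^{(\alpha)}_a}\right)dx=0.
\]

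As this holds for all admissible $\eta$, the classical fundamental lemma of the calculus of variations (equivalently, Lemma~\ref{Flemma} after writing $dx=(x-a)^{1-\alpha}\,d^\alpha_ax$ and dividing by the positive factor $(x-a)^{1-\alpha}$ on $(a,b)$) forces the bracket to vanish on $(a,b)$, and on $[a,b]$ by continuity. Finally, from $f^{(\alpha)}_a=(x-a)^{1-\alpha}f'$ one has $\dfrac{d}{dx}g=\dfrac{1}{(x-a)^{1-\alpha}}\dfrac{d^\alpha_a}{dx^\alpha_a}g$; applying this with $g=\partial\tilde L/\partial y^{(\alpha)}_a$ rewrites the vanishing bracket as precisely \eqref{GEL}. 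I do not expect a genuine obstacle: the only points needing care are the regularity required to differentiate under the integral sign and to integrate by parts — as in Theorem~\ref{Tb1} one implicitly needs $\partial L/\partial y'$ and $\partial\tilde L/\partial y^{(\alpha)}_a$ to be differentiable along the extremal, which $L\in C^1$ and $\eta\in C^1$ make routine — and the harmless degeneration of the weight $(x-a)^{1-\alpha}$ at $x=a$, which does not affect the conclusion.
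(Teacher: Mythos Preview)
Your proof is correct and follows the same overall structure as the paper's --- same family of variations, same Gateaux derivative, same introduction of $\tilde L$. The one genuine difference is in how the conformable term is handled: the paper first rewrites $\int_a^b \eta^{(\alpha)}_a\,\partial L/\partial y^{(\alpha)}_a\,dx$ as the conformable integral $\int_a^b \eta^{(\alpha)}_a\,\partial \tilde L/\partial y^{(\alpha)}_a\,d^\alpha_a x$ and then applies the conformable integration-by-parts formula~\eqref{a11}, arriving at $\frac{d^\alpha_a}{dx^\alpha_a}\partial\tilde L/\partial y^{(\alpha)}_a$ directly inside a conformable integral, to which Lemma~\ref{Flemma} is applied. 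You instead use Remark~\ref{Ra1} to replace $\eta^{(\alpha)}_a$ by $(x-a)^{1-\alpha}\eta'$, apply \emph{ordinary} integration by parts to obtain $\frac{d}{dx}\partial\tilde L/\partial y^{(\alpha)}_a$, invoke the classical fundamental lemma, and only at the end convert this classical derivative to the conformable one via $g'=(x-a)^{\alpha-1}g^{(\alpha)}_a$. Your route is marginally more elementary --- it never invokes the conformable integration-by-parts machinery --- while the paper's route keeps the conformable structure visible throughout; the two are equivalent, since~\eqref{a11} is itself ordinary integration by parts in disguise.
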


\begin{proof}
Let $y^{\ast}$ give an extremum to \eqref{GJ}. We define a family of
functions as in \eqref{proof1} but with $y\in C^1[a,b]$. From \eqref{proof1}
and the boundary conditions $\eta(a)=\eta(b)=0$, and the fact
that $y^{\ast}(a)=y_a$ and $y^{\ast}(b)=y_b$, it follows that function $y$
is admissible. Because $y^{\ast}$ is an extremizer of $\mathcal{J}$, the
Gateaux derivative $\delta \mathcal{J}(y^{\ast})$ needs to be identically null.
For the functional \eqref{GJ} we have
\begin{equation*}
\begin{split}
\delta \mathcal{J}(y^{\ast})&=\lim_{\epsilon\rightarrow 0}
\frac{1}{\epsilon}\left( \int_a^b L\left(x,y,y',y^{(\alpha)}_a\right) \,dx
-\int_a^b L\left(x,y^\ast,y'^\ast,{y^\ast}^{(\alpha)}_a\right) \,dx\right)\\
&=\int_a^b \left(\eta(x)\frac{\partial
L\left(x,y^\ast,y'^\ast,{y^\ast}^{(\alpha)}_a\right)}{\partial y^{\ast}}
+\eta'(x)\frac{\partial
L\left(x,y^\ast,y'^\ast,{y^\ast}^{(\alpha)}_a\right)}{\partial y'^{\ast}}\right)dx\\
&+\int_a^b\eta^{(\alpha)}_a(x)\frac{\partial
L\left(x,y^\ast,y'^\ast,{y^\ast}^{(\alpha)}_a\right)}{\partial {y^\ast}^{(\alpha)}_a}dx\\
&=\int_a^b\eta(x) \left(\frac{\partial
L\left(x,y^\ast,y'^\ast,{y^\ast}^{(\alpha)}_a\right)}{\partial y^{\ast}}
-\frac{d}{dx}\frac{\partial
L\left(x,y^\ast,y'^\ast,{y^\ast}^{(\alpha)}_a\right)}{\partial y'^{\ast}}\right)dx\\
&+\int_a^b\eta^{(\alpha)}_a(x)\frac{\partial
\tilde{L}\left(x,y^\ast,y'^\ast,{y^\ast}^{(\alpha)}_a\right)}{\partial {y^\ast}^{(\alpha)}_a}d^\alpha_ax=0,
\end{split}
\end{equation*}
where we performed an integration by parts in the second term in the first
integral (since $\eta(a)=\eta(b)=0$), and we rewrote the second integral 
as a conformable integral by using definition \eqref{a5}. 
Using the integration by parts formula \eqref{a11}
($\eta\frac{\partial L}{\partial {y^\ast}^{(\alpha)}_a}$ is differentiable) we get
\begin{equation}
\label{proofT2}
\begin{split}
\delta \mathcal{J}(y^{\ast})&=\int_a^b\eta(x) \left(\frac{\partial
L\left(x,y^\ast,y'^\ast,{y^\ast}^{(\alpha)}_a\right)}{\partial y^{\ast}}
-\frac{d}{dx}\frac{\partial
L\left(x,y^\ast,y'^\ast,{y^\ast}^{(\alpha)}_a\right)}{\partial y'^{\ast}}\right)dx\\
&-\int_a^b\eta(x)\frac{d^\alpha_a}{dx^\alpha_a}\frac{\partial
\tilde{L}\left(x,y^\ast,y'^\ast,{y^\ast}^{(\alpha)}_a\right)}{\partial {y^\ast}^{(\alpha)}_a}d^\alpha_ax\\
&=\int_a^b\eta(x) \left((x-a)^{1-\alpha}\frac{\partial
L\left(x,y^\ast,y'^\ast,{y^\ast}^{(\alpha)}_a\right)}{\partial y^{\ast}}
-(x-a)^{1-\alpha}\frac{d}{dx}\frac{\partial
L\left(x,y^\ast,y'^\ast,{y^\ast}^{(\alpha)}_a\right)}{\partial y'^{\ast}}\right.\\
&\;\;\;\;\;\;\;\;\;\;\;\;\;\;\;\;\;\;\;\;\;\;-\left.\frac{d^\alpha_a}{dx^\alpha_a}\frac{\partial
\tilde{L}\left(x,y^\ast,y'^\ast,{y^\ast}^{(\alpha)}_a\right)}{\partial {y^\ast}^{(\alpha)}_a}
\right)d^\alpha_ax=0,
\end{split}
\end{equation}
since $\eta(a)=\eta(b)=0$. The fractional Euler--Lagrange equation
\eqref{GEL} follows from \eqref{proofT2} by using the
fundamental Lemma~\ref{Flemma}.
\end{proof}

% ----------------------------------------

\section{Lagrangian formulation for frictional forces}
\label{sec:friction}

As an example of potential application of the variational calculus with
conformable fractional derivatives, we formulate an action principle
for dissipative systems free from the mathematical inconsistencies found
in the Riewe approach \cite{LazoCesar} and far simpler than the
formulation proposed in \cite{LazoCesar}. The action principle we
propose states that the equation of motion for dissipative systems
is obtained by taking the limit $a\rightarrow b$ in the extremal
of the action
\begin{equation}
\label{b11}
S=\int_a^b {L}\left(x,x',x^{(\alpha)}_a\right) dt
\end{equation}
that satisfy the fractional Euler--Lagrange equation (see \eqref{GEL})
\begin{equation}
\label{b12}
\frac{\partial L}{\partial x}- \frac{d}{dt}\frac{\partial L}{\partial x'}
-\frac{1}{(t-a)^{1-\alpha}}\frac{d^\alpha_a}{dt^\alpha_a} 
\frac{\partial \tilde{L}}{\partial x^{(\alpha)}_a}=0,
\end{equation}
where $\tilde{L}\left(x,x',x^{(\alpha)}_a\right)
=(t-a)^{1-\alpha}L\left(x,x',x^{(\alpha)}_a\right)$, 
$x(t)$ is the path of the particle and $t$ is the time. 
It is important to emphasize that the condition $a\rightarrow b$ 
(also considered in the original Riewe's approach) applied 
to the action principle does not imply any restrictions 
for conservative systems, since in this case $x(t)$ is the action's extremal 
for any time interval $[a,b]$, even when $a\rightarrow b$. 
Furthermore, our action principle is simpler than the formulation 
in \cite{LazoCesar} and free from the mathematical inconsistencies 
present in Riewe's approach (see \cite{LazoCesar} for a detailed discussion). 
In order to show that our method provides us with physical Lagrangians, 
let us consider the simple problem of a particle under a frictional 
force proportional to velocity. A quadratic Lagrangian for a particle 
under a frictional force proportional to the velocity is given by
\begin{equation}
\label{b13}
L\left(x,x',x^{(\frac{1}{2})}_a\right)
=\frac{1}{2}m\left(x'\right)^2-U(x)
+\frac{\gamma}{2}\left(x^{(\frac{1}{2})}_a\right)^2,
\end{equation}
where the three terms in \eqref{b13} represent the kinetic energy, 
potential energy, and the fractional linear friction energy, respectively. 
Note that differently from Riewe's Lagrangian \cite{CD:Riewe:1997}, 
our Lagrangian \eqref{b13} is a real function with a linear friction 
energy, which is physically meaningful. Since the equation of motion 
is obtained in the limit $a\rightarrow b$, if we consider the last 
term in \eqref{b13} up to first order in $\Delta t=t-a$, we get
\begin{equation*}
\frac{\gamma}{2}\left(x^{(\frac{1}{2})}_a\right)^2 
= \frac{\gamma}{2}\left(x' \Delta t^{\frac{1}{2}}\right)^2 
\approx \frac{\gamma}{2} x' \Delta x,
\end{equation*}
that coincides, apart from the multiplicative constant $1/2$, 
with the work from the frictional force $\gamma x'$ 
in the displacement $\Delta x \approx x'\Delta t$. 
The appearance of an additional multiplicative constant 
is a consequence of the use of fractional derivatives in the Lagrangian 
and does not appear in the equation of motion after 
we apply the action principle \cite{LazoCesar}. 

\begin{remark}
It is important to stress that the order of the fractional derivative 
should be fixed to $\alpha=1/2$ in order to obtain, by a fractional 
Lagrangian, a correct equation of motion of a dissipative system. 
For $\alpha$ different from $1/2$, the Lagrangian does not describe 
a frictional system under a frictional force proportional 
to the velocity. Consequently, the fractional linear friction energy 
makes sense only for $\alpha=1/2$.
\end{remark}

The Lagrangian \eqref{b13} is physical in the sense it provides 
physically meaningful relations for the momentum and the Hamiltonian. 
If we define the canonical variables
\begin{equation*}
q_{1}=x', \;\;\; q_{\frac{1}{2}}=x^{(\frac{1}{2})}_a
\end{equation*}
and
\begin{equation*}
p_1=\frac{\partial L}{\partial q_1}=mx', 
\;\;\; p_{\frac{1}{2}}=\frac{\partial L}{\partial q_{\frac{1}{2}}}
=\gamma x^{(\frac{1}{2})}_a,
\end{equation*}
we obtain the Hamiltonian
\begin{equation}
\label{b17}
H=q_1p_1+q_{\frac{1}{2}}p_{\frac{1}{2}}-L
=\frac{1}{2}m\left(x'\right)^2+U(x)+\frac{\gamma}{2}\left(x^{(\frac{1}{2})}_a\right)^2.
\end{equation}
From \eqref{b17} we can see that the Lagrangian \eqref{b13} is physical 
in the sense it provides us a correct relation for the momentum $p_1=m\dot{x}$, 
and a physically meaningful Hamiltonian (it is the sum of all energies). 
Furthermore, the additional fractional momentum $p_{\frac{1}{2}}=\gamma x^{(\frac{1}{2})}_a$ 
goes to zero when we take the limit $a\rightarrow b$, since $x\in C^2[a,b]$.

Finally, the equation of motion for the particle is obtained 
by inserting our Lagrangian \eqref{b13} 
into the Euler--Lagrange equation \eqref{b12},
\begin{equation}
\label{b15}
m x''+\gamma (t-a)^{-\frac{1}{2}}\frac{d^{\frac{1}{2}}_a}{dt^{\frac{1}{2}}_a}\left[
(t-a)^{\frac{1}{2}}x^{(\frac{1}{2})}_a \right]
=m x''+\gamma x' + \gamma (t-a) x''=F(x),
\end{equation}
where we have used \eqref{a3} since $x\in C^2[a,b]$ 
and $F(x)=-\frac{d}{dx}U(x)$ is the external force. 
By taking the limit $a\rightarrow b$ with $t\in [a,b]$, 
we finally obtain the correct equation of motion 
for a particle under a frictional force:
\begin{equation*}
m x''+\gamma x'=F(x).
\end{equation*}

% ----------------------------------------

\section{The conformable fractional DuBois--Reymond condition}
\label{sec:Dubois}

In the remainder of the present work we are going to consider only the simplest case
where we have no mixed integer and fractional derivatives.
We now present the DuBois--Reymond condition in the conformable fractional context.
It is an immediate consequence of the chain rule \eqref{a14}
and the Euler--Lagrange equation \eqref{FracELEquation}.

\begin{theorem}[The conformable fractional DuBois--Reymond condition]
\label{dubois}
If $y$ is an extremal of $\mathcal{J}$ as in \eqref{funct}, then
\begin{equation}
\label{duboiscondition}
\frac{d^\alpha_a}{dx^\alpha_a}\left(
L-\frac{\partial L}{\partial y^{(\alpha)}_a} y^{(\alpha)}_a  \right)
=\frac{\partial L}{\partial x}\cdot (x-a)^{1-\alpha}.
\end{equation}
\end{theorem}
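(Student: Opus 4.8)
The plan is to differentiate the expression $L-\frac{\partial L}{\partial y^{(\alpha)}_a}y^{(\alpha)}_a$ directly, using the conformable chain rule \eqref{a14} and the product rule of Proposition~\ref{Ra2}(ii), and then to collapse the result with the Euler--Lagrange equation \eqref{FracELEquation}.

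First I would account for the explicit dependence of $L$ on $x$. Since \eqref{a14} is phrased for compositions $f(y_1(x),\dots,y_N(x))$ with no explicit $x$, I would regard $L\bigl(x,y(x),y^{(\alpha)}_a(x)\bigr)$ as a function of the three $\alpha$-differentiable curves $y_0(x)=x$, $y_1(x)=y(x)$, $y_2(x)=y^{(\alpha)}_a(x)$. The identity map is $C^1$, so by Remark~\ref{Ra1} its conformable derivative is $(x-a)^{1-\alpha}$, and \eqref{a14} yields
\begin{equation*}
\frac{d^\alpha_a}{dx^\alpha_a}L
=\frac{\partial L}{\partial x}(x-a)^{1-\alpha}
+\frac{\partial L}{\partial y}\,y^{(\alpha)}_a
+\frac{\partial L}{\partial y^{(\alpha)}_a}\,\bigl(y^{(\alpha)}_a\bigr)^{(\alpha)}_a .
\end{equation*}
The regularity hypotheses carried over from Theorem~\ref{Tb1} (namely that $y^{(\alpha)}_a$ and $\partial L/\partial y^{(\alpha)}_a$ are $\alpha$-differentiable) ensure that every term here is defined.

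Next I would apply the conformable product rule, Proposition~\ref{Ra2}(ii), to the second summand of the bracketed expression:
\begin{equation*}
\frac{d^\alpha_a}{dx^\alpha_a}\!\left(\frac{\partial L}{\partial y^{(\alpha)}_a}\,y^{(\alpha)}_a\right)
=\frac{d^\alpha_a}{dx^\alpha_a}\!\left(\frac{\partial L}{\partial y^{(\alpha)}_a}\right)y^{(\alpha)}_a
+\frac{\partial L}{\partial y^{(\alpha)}_a}\,\bigl(y^{(\alpha)}_a\bigr)^{(\alpha)}_a .
\end{equation*}
Subtracting this from the previous identity, the two terms containing $\bigl(y^{(\alpha)}_a\bigr)^{(\alpha)}_a$ cancel, leaving
\begin{equation*}
\frac{d^\alpha_a}{dx^\alpha_a}\!\left(L-\frac{\partial L}{\partial y^{(\alpha)}_a}y^{(\alpha)}_a\right)
=\frac{\partial L}{\partial x}(x-a)^{1-\alpha}
+\left(\frac{\partial L}{\partial y}-\frac{d^\alpha_a}{dx^\alpha_a}\frac{\partial L}{\partial y^{(\alpha)}_a}\right)y^{(\alpha)}_a .
\end{equation*}
Because $y$ is an extremal, the parenthesized factor vanishes by \eqref{FracELEquation}, and \eqref{duboiscondition} follows at once.

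There is no serious obstacle here; the one point demanding care is the bookkeeping of the differentiability assumptions, so that the chain rule \eqref{a14} and the product rule are legitimately applicable to $L$, to $\partial L/\partial y^{(\alpha)}_a$, and to their product — precisely the hypotheses already imposed in Theorem~\ref{Tb1}. Modulo that verification, the argument reduces to the short manipulation sketched above.
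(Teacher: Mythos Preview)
Your proposal is correct and follows essentially the same route as the paper's proof: both expand $\frac{d^\alpha_a}{dx^\alpha_a}\bigl(L-\frac{\partial L}{\partial y^{(\alpha)}_a}y^{(\alpha)}_a\bigr)$ via the chain rule \eqref{a14} and the product rule of Proposition~\ref{Ra2}(ii), cancel the $\bigl(y^{(\alpha)}_a\bigr)^{(\alpha)}_a$ terms, and invoke \eqref{FracELEquation}. The only cosmetic difference is that the paper writes the conformable derivative of the identity map as $x^{(\alpha)}_a$ rather than $(x-a)^{1-\alpha}$, and performs the differentiation of both summands in a single display.
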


\begin{proof}
By the chain rule \eqref{a14} and the Leibniz rule in Proposition~\ref{Ra2},
\begin{equation*}
\begin{split}
\frac{d^\alpha_a}{dx^\alpha_a}
&\left( L-\frac{\partial L}{\partial y^{(\alpha)}_a} y^{(\alpha)}_a\right)\\
&=\frac{\partial L}{\partial x}x^{(\alpha)}_a
+\frac{\partial L}{\partial y}y^{(\alpha)}_a
+\frac{\partial L}{\partial y^{(\alpha)}_a}
\frac{d^\alpha_a}{dx^\alpha_a}y^{(\alpha)}_a
-\frac{d^\alpha_a}{dx^\alpha_a}\left(
\frac{\partial L}{\partial y^{(\alpha)}_a}\right) y^{(\alpha)}_a
-\frac{\partial L}{\partial y^{(\alpha)}_a}
\frac{d^\alpha_a}{dx^\alpha_a}y^{(\alpha)}_a\\
&=\frac{\partial L}{\partial x}x^{(\alpha)}_a
+y^{(\alpha)}_a\left[ \frac{\partial L}{\partial y}
-\frac{d^\alpha_a}{dx^\alpha_a}\left(
\frac{\partial L}{\partial y^{(\alpha)}_a}\right)\right]
=\frac{\partial L}{\partial x}\cdot (x-a)^{1-\alpha}.
\end{split}
\end{equation*}
The proof is complete.
\end{proof}

\begin{corollary}
If \eqref{funct} is autonomous, that is,
if $L=L(y,y^{(\alpha)}_a)$ does not depend on $x$, then
$$
\frac{d^\alpha_a}{dx^\alpha_a}\left(
L-\frac{\partial L}{\partial y^{(\alpha)}_a} y^{(\alpha)}_a  \right)=0
$$
along any extremal $y$.
\end{corollary}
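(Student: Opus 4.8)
The plan is to read this off directly from the conformable fractional DuBois--Reymond condition (Theorem~\ref{dubois}), which I am entitled to assume. First I would note that $y$ is an extremal of $\mathcal{J}$ as in \eqref{funct}, so the hypotheses of Theorem~\ref{dubois} are met and equation \eqref{duboiscondition} holds along $y$:
\begin{equation*}
\frac{d^\alpha_a}{dx^\alpha_a}\left(
L-\frac{\partial L}{\partial y^{(\alpha)}_a} y^{(\alpha)}_a  \right)
=\frac{\partial L}{\partial x}\cdot (x-a)^{1-\alpha}.
\end{equation*}

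Next I would invoke the autonomy hypothesis: since $L=L(y,y^{(\alpha)}_a)$ has no explicit dependence on the independent variable $x$, we have $\frac{\partial L}{\partial x}\equiv 0$ on $[a,b]$. Substituting this into the right-hand side above makes it vanish identically (the factor $(x-a)^{1-\alpha}$ is irrelevant once it multiplies zero), which yields
\begin{equation*}
\frac{d^\alpha_a}{dx^\alpha_a}\left(
L-\frac{\partial L}{\partial y^{(\alpha)}_a} y^{(\alpha)}_a  \right)=0
\end{equation*}
along any extremal $y$, as claimed.

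There is essentially no obstacle here: the corollary is the special case $\partial L/\partial x=0$ of Theorem~\ref{dubois}, and all the real work (the chain rule \eqref{a14}, the Leibniz rule from Proposition~\ref{Ra2}, and the Euler--Lagrange equation \eqref{FracELEquation}) has already been carried out in the proof of that theorem. The only point worth a word of care is purely formal --- checking that "$L$ does not depend on $x$" is exactly the condition under which the term $\frac{\partial L}{\partial x}x^{(\alpha)}_a$ appearing in the intermediate computation of Theorem~\ref{dubois} drops out --- but this is immediate. One could optionally remark that for $\alpha\to 1$ this recovers the classical statement that $L-y'\,\partial L/\partial y'$ (the energy) is conserved for autonomous Lagrangians.
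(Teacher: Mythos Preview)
Your proposal is correct and matches the paper's approach: the corollary is stated immediately after Theorem~\ref{dubois} without proof, precisely because it is the special case $\partial L/\partial x=0$ of \eqref{duboiscondition}, exactly as you argue. There is nothing to add.
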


\begin{remark}
When $\alpha = 1$ and $y\in C^1$, Theorem~\ref{dubois}
is the classical DuBois--Reymond condition: if $y\in C^1$
is an extremal of $\mathcal{J}(y)=\int_0^1L(x,y,y') dx$
(\textrm{i.e.}, $y$ satisfies \eqref{ELEquation}), then
$$
\frac{d}{dx}\left( L-\frac{\partial L}{\partial y'} y'\right)
=\frac{\partial L}{\partial x}.
$$
\end{remark}

% ----------------------------------------

\section{Fractional invariant conditions}
\label{sec:Invar}

We consider invariance transformations in the $(x,y)$-space,
depending on a real parameter $\epsilon$. To be more precise,
we consider transformations of type
\begin{equation}
\label{trans}
\left\{
\begin{array}{l}
\overline x=x+\epsilon \tau (x,y(x)),\\
\overline y=y+\epsilon \xi (x,y(x)),\\
\end{array}
\right.\end{equation}
where the generators $\tau$ and $\xi$ are such that
$\overline x \ge a$ and there exist
$\tau^{(\alpha)}_a$ and $\xi^{(\alpha)}_a$.

\begin{definition}
\label{invariantDef}
We say that the fractional variational integral \eqref{funct} is invariant under
the family of transformations \eqref{trans} up to the Gauge term $\Lambda$,
if a function $\Lambda=\Lambda(x,y)$ exists such that for any function $y$
and for any real $x\in[a,b]$, we have
\begin{equation}
\label{invar}
L\left(\overline x,\overline y,
\frac{d^\alpha_a\overline y}{d\overline x^\alpha_a}\right)
\frac{d^\alpha_a\overline x}{d^\alpha_a x}
=L(x,y,y^{(\alpha)}_a)+\epsilon \frac{d^\alpha_a\Lambda}{dx^\alpha_a}(x,y)
+ o(\epsilon)
\end{equation}
for all $\epsilon$ in some neighborhood of zero,
where $\frac{d^\alpha_a\overline x}{d^\alpha_a x}$ stands for
\begin{equation}
\label{eq:standsfor}
\frac{\frac{d^\alpha_a\overline x}{d x^\alpha_a}}{\frac{d^\alpha_a x}{d x^\alpha_a}}
=1+\epsilon \frac{\tau^{(\alpha)}_a}{(x-a)^{1-\alpha}}.
\end{equation}
\end{definition}

We note that for $\alpha=1$ our Definition~\ref{invariantDef} coincides with
the standard approach (see, \textrm{e.g.}, \cite{Sarlet}). When $\Lambda\equiv 0$,
one obtains the concept of absolute invariance. The presence of a new function
$\Lambda$ is due to the presence of external forces in the dynamical system,
like friction. The function $\Lambda$ is called a \emph{Gauge} term.
In fact, many phenomena are nonconservative and this has to be taken into account
in the conservation laws \cite{Frederico:Torres3,Frederico:Torres4}. We give an example.

\begin{example}
Consider the transformation
\begin{equation}
\label{trans3}
\left\{
\begin{array}{l}
\overline x=x\\
\overline y= y+\epsilon \frac{1}{2\alpha}(x-a)^\alpha\\
\end{array}
\right.
\end{equation}
and the functional
\begin{equation}
\label{example}
\mathcal{J}(y)=\int_a^b\left(y^{(\alpha)}_a(x)\right)^2 \,d^\alpha_ax.
\end{equation}
Since
$$
\frac{d^\alpha_a}{dx^\alpha_a}\frac{1}{2\alpha}(x-a)^\alpha=\frac12,
$$
it is easy to verify that \eqref{example} is invariant under \eqref{trans3}
up to the Gauge function $\Lambda=y$.
\end{example}

\begin{definition}
\label{conservedDef}
Given a function $C=C(x,y,y^{(\alpha)}_a)$,
we say that $C$ is a conserved quantity for \eqref{funct} if
\begin{equation}
\label{conserved}
\frac{d^\alpha_a C}{dx^\alpha_a}(x,y(x),y^{(\alpha)}_a(x))=0
\end{equation}
along any solution $y$ of \eqref{FracELEquation}
(\textrm{i.e.}, along any extremal of \eqref{funct}).
\end{definition}

\begin{remark}
Applying the conformable integral \eqref{a5} to both sides
of equation \eqref{conserved}, Definition~\ref{conservedDef}
is equivalent to
$C(x,y(x),y^{(\alpha)}_a(x)) \equiv \, \mbox{const}$.
\end{remark}

We now provide a necessary condition of invariance.

\begin{theorem}
\label{TeoInv}
If $\mathcal{J}$ given by \eqref{funct} is invariant
under a family of transformations \eqref{trans}, then
\begin{equation}
\label{invar2}
\frac{\partial L}{\partial x}\tau + \frac{\partial L}{\partial y}\xi
+ \frac{\partial L}{\partial y^{(\alpha)}_a}\left[ \xi^{(\alpha)}_a
-y^{(\alpha)}_a\left((\alpha-1)\frac{\tau}{(x-a)}
+\frac{\tau^{(\alpha)}_a}{(x-a)^{1-\alpha}} \right) \right]
+L \frac{\tau^{(\alpha)}_a}{(x-a)^{1-\alpha}}=\frac{d^\alpha_a\Lambda}{dx^\alpha_a}.
\end{equation}
\end{theorem}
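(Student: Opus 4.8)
The plan is to exploit the fact that the invariance condition \eqref{invar} is an identity in the parameter $\epsilon$ (valid up to $o(\epsilon)$, for every $x\in[a,b]$), and to differentiate both sides with respect to $\epsilon$ and set $\epsilon=0$. On the right-hand side this yields exactly $\frac{d^\alpha_a\Lambda}{dx^\alpha_a}$, since the $o(\epsilon)$ term contributes nothing to the derivative at the origin and the Gauge term is linear in $\epsilon$. So the whole content of the theorem reduces to computing the $\epsilon$-derivative at $0$ of the left-hand side $L\big(\overline x,\overline y,\frac{d^\alpha_a\overline y}{d\overline x^\alpha_a}\big)\,\frac{d^\alpha_a\overline x}{d^\alpha_a x}$.

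First I would collect the three elementary variations. From \eqref{trans}, $\partial\overline x/\partial\epsilon|_{\epsilon=0}=\tau$ and $\partial\overline y/\partial\epsilon|_{\epsilon=0}=\xi$; from \eqref{eq:standsfor}, $\frac{d^\alpha_a\overline x}{d^\alpha_a x}|_{\epsilon=0}=1$ with $\epsilon$-derivative $\frac{\tau^{(\alpha)}_a}{(x-a)^{1-\alpha}}$. Applying the ordinary multivariable chain rule to $L$ (legitimate because $L\in C^1$ and all its arguments are differentiable in $\epsilon$) together with the product rule for the extra factor, the $\epsilon$-derivative of the left-hand side at $0$ becomes
$$
\frac{\partial L}{\partial x}\tau+\frac{\partial L}{\partial y}\xi
+\frac{\partial L}{\partial y^{(\alpha)}_a}\cdot\frac{\partial}{\partial\epsilon}\Big|_{0}\!\left(\frac{d^\alpha_a\overline y}{d\overline x^\alpha_a}\right)
+L\,\frac{\tau^{(\alpha)}_a}{(x-a)^{1-\alpha}}.
$$
Comparing with \eqref{invar2}, the statement therefore amounts to the identity $\frac{\partial}{\partial\epsilon}\big|_{0}\frac{d^\alpha_a\overline y}{d\overline x^\alpha_a}=\xi^{(\alpha)}_a-y^{(\alpha)}_a\big((\alpha-1)\frac{\tau}{x-a}+\frac{\tau^{(\alpha)}_a}{(x-a)^{1-\alpha}}\big)$.

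This last computation is the crux, and the only place requiring care. Using Remark~\ref{Ra1} together with the ordinary chain rule $\frac{d\overline y}{d\overline x}=\frac{d\overline y/dx}{d\overline x/dx}$ (valid since for small $\epsilon$ the map $x\mapsto\overline x$ is a $C^1$-diffeomorphism of $[a,b]$ onto its image), I would write
$$
\frac{d^\alpha_a\overline y}{d\overline x^\alpha_a}
=(x-a+\epsilon\tau)^{1-\alpha}\,\frac{y'+\epsilon\,\tfrac{d}{dx}\xi(x,y(x))}{1+\epsilon\,\tfrac{d}{dx}\tau(x,y(x))},
$$
which is now a genuine function of $(x,\epsilon)$. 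Differentiating at $\epsilon=0$ gives $(1-\alpha)(x-a)^{-\alpha}\tau\,y'+(x-a)^{1-\alpha}\big(\tfrac{d}{dx}\xi-y'\tfrac{d}{dx}\tau\big)$, and then I would convert the ordinary derivatives back to conformable ones via $y'=(x-a)^{\alpha-1}y^{(\alpha)}_a$, $\tfrac{d}{dx}\xi=(x-a)^{\alpha-1}\xi^{(\alpha)}_a$, $\tfrac{d}{dx}\tau=(x-a)^{\alpha-1}\tau^{(\alpha)}_a$; the powers of $(x-a)$ collapse precisely to the bracket displayed in \eqref{invar2}. Substituting this back into the expression above completes the proof. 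The only real obstacle is bookkeeping: keeping the exponents of $(x-a)$ straight through these substitutions and differentiating the quotient correctly; conceptually nothing beyond the chain rule and Remark~\ref{Ra1} is needed. (If one prefers not to assume $C^1$-regularity of the composites $x\mapsto\tau(x,y(x))$ and $x\mapsto\xi(x,y(x))$, the same identity for $\partial_\epsilon|_0\frac{d^\alpha_a\overline y}{d\overline x^\alpha_a}$ can be extracted directly from the limit definition \eqref{a1}, at the cost of a longer but routine estimate.)
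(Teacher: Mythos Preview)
Your proposal is correct and follows essentially the same approach as the paper: differentiate the invariance identity \eqref{invar} with respect to $\epsilon$ at $\epsilon=0$, the key step being the expression for $\frac{d^\alpha_a\overline y}{d\overline x^\alpha_a}$. The only cosmetic difference is that the paper obtains that expression directly from the fractional chain rule of Proposition~\ref{Ra2}, writing $\frac{d^\alpha_a\overline y}{d\overline x^\alpha_a}=\dfrac{y^{(\alpha)}_a+\epsilon\,\xi^{(\alpha)}_a}{(x+\epsilon\tau-a)^{\alpha-1}\big[(x-a)^{1-\alpha}+\epsilon\,\tau^{(\alpha)}_a\big]}$, whereas you pass through Remark~\ref{Ra1} and ordinary derivatives before converting back; the two formulas are identical and the subsequent differentiation is the same.
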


\begin{proof}
By the fractional chain rule (see Proposition~\ref{Ra2}),
$$
\frac{d^\alpha_a\overline y}{d\overline x^\alpha_a}
= \frac{\frac{d^\alpha_a\overline y}{dx^\alpha_a}}{(\overline x-a)^{\alpha-1}
\frac{d^\alpha_a\overline x}{dx^\alpha_a}}
=\frac{y^{(\alpha)}_a
+\epsilon \xi^{(\alpha)}_a }{ (x+\epsilon\tau-a)^{\alpha-1}[(x-a)^{1-\alpha}
+\epsilon\tau^{(\alpha)}_a]}.
$$
Substituting this formula into \eqref{invar}, differentiating with respect
to $\epsilon$ and then putting $\epsilon=0$, we obtain relation \eqref{invar2}.
\end{proof}

\begin{remark}
Allowing $\alpha$ to be equal to $1$, for $\Lambda\equiv0$ our equation \eqref{invar2}
becomes the standard necessary condition of invariance
(\textrm{cf.}, \textrm{e.g.}, \cite{Logan}):
$$
\frac{\partial L}{\partial x}\tau+  \frac{\partial L}{\partial y}\xi
+ \frac{\partial L}{\partial y'}( \xi'-y' \tau')+L \tau'=0.
$$
For $\alpha=1$ and an arbitrary $\Lambda$, see \cite{Sarlet}.
\end{remark}

In particular, if we consider ``time invariance''
(\textrm{i.e.}, $\tau\equiv0$), we obtain the following result.

\begin{corollary}
Let $\overline y=y+\epsilon \xi (x,y(x))$ be a transformation
that leaves invariant $\mathcal{J}$ in the sense that
$$
L(x,\overline y,\overline y^{(\alpha)}_a)
=L(x,y,y^{(\alpha)}_a)+\epsilon \frac{d^\alpha_a\Lambda}{dx^\alpha_a}(x,y)
+o(\epsilon).
$$
Then,
$$
\frac{\partial L}{\partial y}\xi
+ \frac{\partial L}{\partial y^{(\alpha)}_a}\xi^{(\alpha)}_a
=\frac{d^\alpha_a\Lambda}{dx^\alpha_a}.
$$
\end{corollary}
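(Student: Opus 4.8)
The plan is to obtain this corollary as the special case $\tau\equiv 0$ of Theorem~\ref{TeoInv}. First I would observe that when $\tau(x,y)\equiv 0$ the transformation \eqref{trans} reduces to $\overline x=x$, $\overline y=y+\epsilon\xi(x,y(x))$, which is exactly the transformation in the statement; moreover, by Proposition~\ref{Ra2}(i) the conformable derivative of the zero function vanishes, so $\tau^{(\alpha)}_a\equiv 0$ and hence $\frac{d^\alpha_a\overline x}{d^\alpha_a x}=1$ via \eqref{eq:standsfor}. Consequently the hypothesized invariance relation in the corollary is precisely \eqref{invar} for this choice of $\tau$, so $\mathcal J$ is invariant in the sense of Definition~\ref{invariantDef} and Theorem~\ref{TeoInv} applies.

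Next I would substitute $\tau\equiv 0$, $\tau^{(\alpha)}_a\equiv 0$ into the necessary condition \eqref{invar2}. The term $\frac{\partial L}{\partial x}\tau$ vanishes; inside the bracket multiplying $\frac{\partial L}{\partial y^{(\alpha)}_a}$, both $(\alpha-1)\frac{\tau}{x-a}$ and $\frac{\tau^{(\alpha)}_a}{(x-a)^{1-\alpha}}$ vanish, leaving only $\xi^{(\alpha)}_a$; and the final term $L\frac{\tau^{(\alpha)}_a}{(x-a)^{1-\alpha}}$ vanishes. What survives is exactly
$$
\frac{\partial L}{\partial y}\xi+\frac{\partial L}{\partial y^{(\alpha)}_a}\xi^{(\alpha)}_a=\frac{d^\alpha_a\Lambda}{dx^\alpha_a},
$$
which is the claimed identity.

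Alternatively, and essentially as a streamlined version of the proof of Theorem~\ref{TeoInv}, one may argue directly: since $\overline x=x$ the factor $\frac{d^\alpha_a\overline x}{d^\alpha_a x}$ equals $1$, so the invariance hypothesis reads $L(x,\overline y,\overline y^{(\alpha)}_a)=L(x,y,y^{(\alpha)}_a)+\epsilon\frac{d^\alpha_a\Lambda}{dx^\alpha_a}+o(\epsilon)$; differentiating both sides with respect to $\epsilon$, evaluating at $\epsilon=0$, and using the several-variable chain rule \eqref{a14} together with $\frac{\partial\overline y}{\partial\epsilon}|_{\epsilon=0}=\xi$ and $\overline y^{(\alpha)}_a=y^{(\alpha)}_a+\epsilon\xi^{(\alpha)}_a$ (linearity, Proposition~\ref{Ra2}(i)), so that $\frac{\partial\overline y^{(\alpha)}_a}{\partial\epsilon}|_{\epsilon=0}=\xi^{(\alpha)}_a$, produces the identity at once.

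I do not expect a genuine obstacle here: the statement is a specialization of an already-proved theorem, so the only care needed is bookkeeping — verifying that each $\tau$-dependent contribution in \eqref{invar2} genuinely disappears (in particular that $\tau\equiv 0$ forces $\tau^{(\alpha)}_a\equiv 0$ identically, not merely at a point), and, should one take the direct route instead, justifying the interchange of the $\epsilon$-derivative with the $o(\epsilon)$ remainder exactly as in the proof of Theorem~\ref{TeoInv}.
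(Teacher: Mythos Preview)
Your proposal is correct and matches the paper's approach: the corollary is stated immediately after Theorem~\ref{TeoInv} without a separate proof, so the intended argument is precisely the specialization $\tau\equiv 0$ (hence $\tau^{(\alpha)}_a\equiv 0$) in \eqref{invar2}, which collapses to the claimed identity. Your alternative direct differentiation is just the streamlined version of the proof of Theorem~\ref{TeoInv} in this special case, so there is nothing to add.
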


% --------------------------------------------------------

\section{The conformable fractional Noether theorem}
\label{sec:Noether}

Noether's theorem is a beautiful result with important implications
and applications in optimal control \cite{Torres,Torres1,Torres3}.
We provide here a conformable fractional Noether theorem
in the context of the calculus of variations. Later,
in Section~\ref{sec:FOC}, we provide a conformable fractional
optimal control version (see Theorem~\ref{fracNoether}).

\begin{theorem}[The conformable fractional Noether theorem]
If $\mathcal{J}$ given by \eqref{funct} is invariant under \eqref{trans}
and if $y$ is an extremal of $\mathcal{J}$, then
\begin{multline}
\label{invar20}
\frac{d^\alpha_a}{dx^\alpha_a} \left[ \left(L
-\frac{\partial L}{\partial y^{(\alpha)}_a}y^{(\alpha)}_a\right)\tau
+ \frac{\partial L}{\partial y^{(\alpha)}_a}\xi (x-a)^{1-\alpha}\right]\\
=(1-\alpha)\frac{\partial L}{\partial y^{(\alpha)}_a}\left[\xi (x-a)^{1-2\alpha}
-\frac{y^{(\alpha)}_a\tau}{(x-a)^{\alpha}}\right]
+\frac{d^\alpha_a\Lambda}{dx^\alpha_a}(x-a)^{1-\alpha}.
\end{multline}
\end{theorem}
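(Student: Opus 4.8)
The plan is to derive \eqref{invar20} by feeding two extremality identities---the conformable Euler--Lagrange equation \eqref{FracELEquation} and the conformable DuBois--Reymond condition \eqref{duboiscondition}---into the necessary condition of invariance \eqref{invar2} furnished by Theorem~\ref{TeoInv}. To keep the computation readable, abbreviate $P=\frac{\partial L}{\partial y^{(\alpha)}_a}$ and $Q=L-P\,y^{(\alpha)}_a$, so that the bracket whose $\alpha$-derivative stands on the left of \eqref{invar20} is $Q\tau+P\xi\,(x-a)^{1-\alpha}$.

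First I would rewrite \eqref{invar2} along an extremal $y$: by \eqref{FracELEquation}, $\frac{\partial L}{\partial y}=P^{(\alpha)}_a$, and by \eqref{duboiscondition}, $\frac{\partial L}{\partial x}\,(x-a)^{1-\alpha}=Q^{(\alpha)}_a$. Multiplying the whole identity \eqref{invar2} by $(x-a)^{1-\alpha}$ then turns $\frac{\partial L}{\partial x}\tau$ into $Q^{(\alpha)}_a\tau$, turns the two $\tau$-terms $L\,\tau^{(\alpha)}_a$ and $-P\,y^{(\alpha)}_a\tau^{(\alpha)}_a$ into the single term $Q\,\tau^{(\alpha)}_a$, and leaves a residual term $(1-\alpha)(x-a)^{-\alpha}P\,y^{(\alpha)}_a\tau$ coming from the $(\alpha-1)\tau/(x-a)$ piece. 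The identity now reads
\[
Q^{(\alpha)}_a\tau+Q\,\tau^{(\alpha)}_a+(x-a)^{1-\alpha}\bigl(P^{(\alpha)}_a\xi+P\,\xi^{(\alpha)}_a\bigr)+(1-\alpha)(x-a)^{-\alpha}P\,y^{(\alpha)}_a\tau=(x-a)^{1-\alpha}\frac{d^\alpha_a\Lambda}{dx^\alpha_a}.
\]

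Next I would recognise the first three terms as one conformable derivative. Applying the Leibniz rule of Proposition~\ref{Ra2}(ii) twice, together with the power rule $\frac{d^\alpha_a}{dx^\alpha_a}(x-a)^p=p\,(x-a)^{p-\alpha}$ (with $p=1-\alpha$) recorded right after Remark~\ref{Ra1}, one obtains
\[
\frac{d^\alpha_a}{dx^\alpha_a}\!\left[Q\tau+P\xi(x-a)^{1-\alpha}\right]=Q^{(\alpha)}_a\tau+Q\,\tau^{(\alpha)}_a+(x-a)^{1-\alpha}\bigl(P^{(\alpha)}_a\xi+P\,\xi^{(\alpha)}_a\bigr)+(1-\alpha)P\xi(x-a)^{1-2\alpha}.
\]
Substituting this into the previous display cancels the three matching terms; after moving $(1-\alpha)P\xi(x-a)^{1-2\alpha}$ to the right and using $-(\alpha-1)=1-\alpha$ on the $y^{(\alpha)}_a\tau$ term, exactly \eqref{invar20} drops out once $P$ and $Q$ are written back in full.

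The only point that needs care is the bookkeeping of the powers of $(x-a)$: since $(x-a)^{1-\alpha}$ is not constant when $\alpha\ne1$, the conformable derivative of the product $P\xi(x-a)^{1-\alpha}$ contributes the extra term $(1-\alpha)P\xi(x-a)^{1-2\alpha}$, and together with the residual term coming from the invariance condition this is precisely the source of the non-zero right-hand side of \eqref{invar20}; when $\alpha=1$ both contributions vanish and one recovers the classical Noether conservation law. Finally, I would note that all these operations are legitimate because $y$, $\tau$, $\xi$, $\Lambda$ and $\frac{\partial L}{\partial y^{(\alpha)}_a}$ are $\alpha$-differentiable and the relevant products are differentiable, so Proposition~\ref{Ra2} and Theorems~\ref{Tb1} and~\ref{dubois} all apply.
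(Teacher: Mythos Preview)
Your proof is correct and follows essentially the same route as the paper: start from the invariance identity \eqref{invar2}, substitute the Euler--Lagrange equation \eqref{FracELEquation} and the DuBois--Reymond condition \eqref{duboiscondition} along the extremal, multiply through by $(x-a)^{1-\alpha}$, and then use the Leibniz rule of Proposition~\ref{Ra2}(ii) together with the power rule to recognise the conformable derivative of $Q\tau+P\xi(x-a)^{1-\alpha}$; the two residual $(1-\alpha)$ terms that remain give exactly the right-hand side of \eqref{invar20}. Your abbreviations $P$ and $Q$ make the bookkeeping tidier than the paper's presentation, but the sequence of substitutions and the use of the product rule on $(x-a)^{1-\alpha}$ are identical.
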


\begin{proof}
From Theorem~\ref{TeoInv}, and using the conformable fractional Euler--Lagrange equation
\eqref{FracELEquation} and the DuBois--Reymond condition \eqref{duboiscondition},
we deduce successively that
$$
\begin{array}{ll}
\displaystyle \frac{d^\alpha_a\Lambda}{dx^\alpha_a} &(x-a)^{1-\alpha}\\
& \displaystyle =\left[  \frac{d^\alpha_a}{dx^\alpha_a}\left(L-\frac{\partial L}{\partial
y^{(\alpha)}_a}y^{(\alpha)}_a\right)\frac{\tau}{(x-a)^{1-\alpha}}
+ \frac{d^\alpha_a}{dx^\alpha_a}\left( \frac{\partial L}{\partial y^{(\alpha)}_a}\right)\xi
+ \frac{\partial L}{\partial y^{(\alpha)}_a}\xi^{(\alpha)}_a\right](x-a)^{1-\alpha}\\
&\displaystyle \qquad -\frac{\partial L}{\partial y^{(\alpha)}_a}y^{(\alpha)}_a\left[
\frac{(\alpha-1)\tau}{(x-a)^\alpha}+\tau^{(\alpha)}_a ) \right] +L \tau^{(\alpha)}_a\\
&\displaystyle = \left[  \frac{d^\alpha_a}{dx^\alpha_a}\left(L
-\frac{\partial L}{\partial y^{(\alpha)}_a}y^{(\alpha)}_a\right)\tau
+ \frac{d^\alpha_a}{dx^\alpha_a}\left( \frac{\partial L}{\partial
y^{(\alpha)}_a}\xi \right)(x-a)^{1-\alpha}\right]\\
&\displaystyle \qquad -\frac{\partial L}{\partial y^{(\alpha)}_a}y^{(\alpha)}_a\left[
\frac{(\alpha-1)\tau}{(x-a)^{\alpha}}+\tau^{(\alpha)}_a) \right] +L \tau^{(\alpha)}_a\\
&\displaystyle =  \frac{d^\alpha_a}{dx^\alpha_a} \left[ \left(L
-\frac{\partial L}{\partial y^{(\alpha)}_a}y^{(\alpha)}_a\right)\tau
+ \frac{\partial L}{\partial y^{(\alpha)}_a}\xi (x-a)^{1-\alpha}\right]
-\frac{\partial L}{\partial y^{(\alpha)}_a}y^{(\alpha)}_a\left[
\frac{(\alpha-1)\tau}{(x-a)^{\alpha}}+\tau^{(\alpha)}_a  \right] \\
&\displaystyle  \qquad + L \tau^{(\alpha)}_a-\left(L-\frac{\partial L}{\partial
y^{(\alpha)}_a}y^{(\alpha)}_a\right)\tau^{(\alpha)}_a-\frac{\partial L}{\partial
y^{(\alpha)}_a}\xi(1-\alpha)(x-a)^{1-2\alpha}\\
&\displaystyle =  \frac{d^\alpha_a}{dx^\alpha_a} \left[ \left(L-\frac{\partial L}{\partial
y^{(\alpha)}_a}y^{(\alpha)}_a\right)\tau+ \frac{\partial L}{\partial y^{(\alpha)}_a}\xi
(x-a)^{1-\alpha}\right]\\
&\displaystyle \qquad +\frac{\partial L}{\partial y^{(\alpha)}_a}y^{(\alpha)}_a
\frac{(1-\alpha)\tau}{(x-a)^{\alpha}}-\frac{\partial L}{\partial
y^{(\alpha)}_a}\xi (1-\alpha)(x-a)^{1-2\alpha}.
\end{array}
$$
Thus, we obtain equation \eqref{invar20}.
\end{proof}

\begin{remark}
When $\alpha=1$, equation \eqref{invar20} is simply Noether's
conservation law in the presence of external forces:
for any extremal of $\mathcal{J}$ and for any family
of transformations $(\overline x,\overline y)$
for which $\mathcal{J}$ is invariant, the conservation law
$$
\left(L-\frac{\partial L}{\partial y'}y'\right)\tau
+ \frac{\partial L}{\partial y'}\xi =\Lambda+ constant
$$
holds (see \cite[Theorem~2.1]{Sarlet}).
In addition, if system is conservative ($\Lambda\equiv 0$),
then one has the classical Noether theorem
$$
\left(L-\frac{\partial L}{\partial y'}y'\right)\tau
+ \frac{\partial L}{\partial y'}\xi =constant.
$$
\end{remark}

\begin{corollary}[The conformable fractional Noether theorem
under the presence of an external force $f$]
\label{corollaryNoether}
If $\mathcal{J}$ given by \eqref{funct} is invariant under \eqref{trans},
$y$ is an extremal of $\mathcal{J}$, and the function $f=f(x,y,y^{(\alpha)}_a)$
satisfies the equation
$$
\frac{d^\alpha_a f}{dx^\alpha_a}
=(1-\alpha)\frac{\partial L}{\partial y^{(\alpha)}_a}\left[\xi (x-a)^{1-2\alpha}
-\frac{y^{(\alpha)}_a\tau}{(x-a)^{\alpha}}\right]
+\frac{d^\alpha_a\Lambda}{dx^\alpha_a}(x-a)^{1-\alpha},
$$
then
$$
\left(L-\frac{\partial L}{\partial y^{(\alpha)}_a}y^{(\alpha)}_a\right)\tau
+ \frac{\partial L}{\partial y^{(\alpha)}_a}\xi (x-a)^{1-\alpha}-f
$$
is a conserved quantity.
\end{corollary}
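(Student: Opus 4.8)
The plan is to read off the result directly from the conformable fractional Noether theorem proved immediately above, i.e.\ from equation \eqref{invar20}. Under the standing hypotheses---that $\mathcal{J}$ of the form \eqref{funct} is invariant under \eqref{trans} and that $y$ is an extremal---equation \eqref{invar20} applies and states that
\[
\frac{d^\alpha_a}{dx^\alpha_a}\left[\left(L-\frac{\partial L}{\partial y^{(\alpha)}_a}y^{(\alpha)}_a\right)\tau+\frac{\partial L}{\partial y^{(\alpha)}_a}\xi(x-a)^{1-\alpha}\right]=(1-\alpha)\frac{\partial L}{\partial y^{(\alpha)}_a}\left[\xi(x-a)^{1-2\alpha}-\frac{y^{(\alpha)}_a\tau}{(x-a)^{\alpha}}\right]+\frac{d^\alpha_a\Lambda}{dx^\alpha_a}(x-a)^{1-\alpha}.
\]

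Next I would observe that, by the assumed identity for $f$, the right-hand side above is exactly $\frac{d^\alpha_a f}{dx^\alpha_a}$---a quantity which is well defined along $y$ through the chain rule \eqref{a14}, since $f$ depends on $x$, $y$ and $y^{(\alpha)}_a$. Hence the two conformable fractional derivatives coincide, and subtracting one from the other while invoking the linearity of the conformable fractional derivative (Proposition~\ref{Ra2}(i)) gives
\[
\frac{d^\alpha_a}{dx^\alpha_a}\left[\left(L-\frac{\partial L}{\partial y^{(\alpha)}_a}y^{(\alpha)}_a\right)\tau+\frac{\partial L}{\partial y^{(\alpha)}_a}\xi(x-a)^{1-\alpha}-f\right]=0
\]
along every extremal $y$. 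By Definition~\ref{conservedDef} this is precisely the assertion that the bracketed combination is a conserved quantity, which completes the proof.

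I do not anticipate any real obstacle: the corollary is essentially a one-line rearrangement of \eqref{invar20}, the only tools being the linearity of $\frac{d^\alpha_a}{dx^\alpha_a}$ and the definition of conserved quantity. The single point deserving a word of care is that $f$ must be taken as a function of $(x,y,y^{(\alpha)}_a)$, so that $\frac{d^\alpha_a f}{dx^\alpha_a}$ makes sense along the extremal via \eqref{a14}; once that is granted, the cancellation is immediate and no further computation is needed.
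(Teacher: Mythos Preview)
Your proposal is correct and matches the paper's intended argument: the corollary is stated in the paper without proof, as an immediate consequence of \eqref{invar20}, and your derivation---substitute the hypothesis on $f$ into the right-hand side of \eqref{invar20}, use linearity, and invoke Definition~\ref{conservedDef}---is exactly that immediate consequence spelled out.
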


\begin{corollary}
If $\mathcal{J}$ given by \eqref{funct} is invariant 
under the transformation $\overline x=x$, 
$\overline y=y+\epsilon \xi (x,y(x))$,
and if $y$ is an extremal of $\mathcal{J}$, then
$$
\frac{\partial L}{\partial y^{(\alpha)}_a}\xi -\Lambda
$$
is a conserved quantity.
\end{corollary}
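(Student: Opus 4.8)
The plan is to read off the conclusion from the conformable fractional Noether theorem \eqref{invar20} by specializing to $\tau\equiv 0$, which is exactly what the transformation $\overline x=x$, $\overline y=y+\epsilon\xi(x,y(x))$ amounts to in the notation of \eqref{trans}. An even more economical route goes directly through the necessary condition of invariance \eqref{invar2}: setting $\tau\equiv 0$ there makes the $\tau$-terms and the $\tau^{(\alpha)}_a$-terms vanish and collapses it to
\[
\frac{\partial L}{\partial y}\,\xi+\frac{\partial L}{\partial y^{(\alpha)}_a}\,\xi^{(\alpha)}_a=\frac{d^\alpha_a\Lambda}{dx^\alpha_a},
\]
which is precisely the corollary already stated immediately after Theorem~\ref{TeoInv}. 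So the first step is simply to invoke that corollary.

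Next I would use the hypothesis that $y$ is an extremal of $\mathcal{J}$, so that the conformable Euler--Lagrange equation \eqref{FracELEquation} holds, i.e.\ $\frac{\partial L}{\partial y}=\frac{d^\alpha_a}{dx^\alpha_a}\bigl(\frac{\partial L}{\partial y^{(\alpha)}_a}\bigr)$. Substituting this into the displayed identity above and recognizing the left-hand side, via the conformable product rule of Proposition~\ref{Ra2}(ii), as a single conformable derivative gives
\[
\frac{d^\alpha_a}{dx^\alpha_a}\left(\frac{\partial L}{\partial y^{(\alpha)}_a}\right)\xi+\frac{\partial L}{\partial y^{(\alpha)}_a}\,\xi^{(\alpha)}_a=\frac{d^\alpha_a}{dx^\alpha_a}\left(\frac{\partial L}{\partial y^{(\alpha)}_a}\,\xi\right)=\frac{d^\alpha_a\Lambda}{dx^\alpha_a}.
\]
Finally, linearity of the conformable derivative (Proposition~\ref{Ra2}(i)) turns this into $\frac{d^\alpha_a}{dx^\alpha_a}\bigl(\frac{\partial L}{\partial y^{(\alpha)}_a}\xi-\Lambda\bigr)=0$, which is exactly the assertion that $\frac{\partial L}{\partial y^{(\alpha)}_a}\xi-\Lambda$ is a conserved quantity in the sense of Definition~\ref{conservedDef}.

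As a cross-check, the same conclusion drops out of \eqref{invar20} with $\tau\equiv 0$: the right-hand side becomes $(1-\alpha)\frac{\partial L}{\partial y^{(\alpha)}_a}\xi\,(x-a)^{1-2\alpha}+\frac{d^\alpha_a\Lambda}{dx^\alpha_a}(x-a)^{1-\alpha}$, while expanding the left-hand side with the product rule and the power rule $\frac{d^\alpha_a}{dx^\alpha_a}(x-a)^{1-\alpha}=(1-\alpha)(x-a)^{1-2\alpha}$ reproduces the first of those two terms exactly; after it cancels and one divides by the strictly positive factor $(x-a)^{1-\alpha}$ on $(a,b)$, the relation $\frac{d^\alpha_a}{dx^\alpha_a}\bigl(\frac{\partial L}{\partial y^{(\alpha)}_a}\xi\bigr)=\frac{d^\alpha_a\Lambda}{dx^\alpha_a}$ emerges once more, and continuity extends it to $x=a$ and $x=b$. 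There is no genuine obstacle in this argument; the only thing needing care is to apply the product rule to the correct grouping of factors and to verify that the $(1-\alpha)$-weighted terms cancel exactly, so that no residual term survives when $\alpha\neq 1$.
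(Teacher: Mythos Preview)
Your proposal is correct. The paper's one-line proof (``The result is due to the fact that $\frac{d^\alpha_a (x-a)^{1-\alpha}}{dx^\alpha_a}=(1-\alpha)(x-a)^{1-2\alpha}$'') is exactly your cross-check route: set $\tau\equiv 0$ in \eqref{invar20}, expand the left side with the product rule and the cited power rule, cancel the matching $(1-\alpha)(x-a)^{1-2\alpha}$ terms, and divide through by $(x-a)^{1-\alpha}$. Your primary route, going directly from the invariance corollary after Theorem~\ref{TeoInv} together with the Euler--Lagrange equation and the product rule, is a slightly more self-contained alternative that bypasses the full Noether identity \eqref{invar20}; both arrive at the same place with essentially the same ingredients.
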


\begin{proof}
The result is due to the fact that
$\frac{d^\alpha_a (x-a)^{1-\alpha}}{dx^\alpha_a}
=(1-\alpha)(x-a)^{1-2\alpha}$.
\end{proof}

% ----------------------------------------

\section{The Hamiltonian formalism}
\label{sec:Hamilton}

The Hamiltonian formalism is related to the Lagrangian one
by the so called Legendre transformation, from coordinates
and velocities to coordinates and momenta. Let momenta be given by
\begin{equation}
\label{defP}
p(x)=\frac{\partial L}{\partial y^{(\alpha)}_a}(x,y(x),y^{(\alpha)}_a(x))
\end{equation}
and the Hamiltonian function by
\begin{equation}
\label{defH}
H(x,y,v,\psi)=-L(x,y,v)+ \psi \, v.
\end{equation}
To simplify notation, $[y](x)$ and $\{y\}(x)$ will denote
$(x,y(x),y^{(\alpha)}_a(x))$ and $(x,y(x),y^{(\alpha)}_a(x),p(x))$, respectively.
Differentiating \eqref{defH}, and using definition \eqref{defP}, it follows that
\begin{equation}
\label{eq:cdH}
\begin{split}
\frac{d^\alpha_a H}{d x^\alpha_a}\{y\}(x)
&= -\frac{\partial L}{\partial x}[y](x)x^{(\alpha)}_a
-\frac{\partial L}{\partial y}[y](x) \cdot y^{(\alpha)}_a(x)
-\frac{\partial L}{\partial v}[y](x)
\cdot \frac{d^\alpha_a}{d x^\alpha_a}y^{(\alpha)}_a(x)\\
&\qquad +p^{(\alpha)}_a(x) \cdot y^{(\alpha)}_a(x)
+ \frac{\partial L}{\partial v}[y](x)
\cdot \frac{d^\alpha_a }{d x^\alpha_a}y^{(\alpha)}_a(x)\\
&=-\frac{\partial L}{\partial x}[y](x)\cdot (x-a)^{1-\alpha}
-\frac{\partial L}{\partial y}[y](x) \cdot y^{(\alpha)}_a(x)
+p^{(\alpha)}_a (x)\cdot y^{(\alpha)}_a(x).
\end{split}
\end{equation}
On the other hand, by the definition of Hamiltonian \eqref{defH},
one has immediately that
$$
\begin{cases}
\frac{\partial H}{\partial x}(x,y,v,\psi)
=-\frac{\partial L}{\partial x}(x,y,v)\\
\frac{\partial H}{\partial y}(x,y,v,\psi)
=-\frac{\partial L}{\partial y}(x,y,v)\\
\frac{\partial H}{\partial \psi}(x,y,v,\psi)=v
\end{cases}
$$
and so we can write equation \eqref{eq:cdH} in the form
\begin{equation}
\label{eq:bef:DR:HF}
\frac{d^\alpha_a H}{d x^\alpha_a}\{y\}(x)
= \frac{\partial H}{\partial x}\{y\}(x)(x-a)^{1-\alpha}
+\frac{\partial H}{\partial y}\{y\}(x) \cdot y^{(\alpha)}_a(x)
+\frac{\partial H}{\partial \psi}\{y\}(x) \cdot p^{(\alpha)}_a(x).
\end{equation}
If $y$ is an extremal of $\mathcal{J}$, then by the conformable fractional
Euler--Lagrange equation \eqref{FracELEquation}
$$
\frac{\partial L}{\partial y}[y](x)
-\frac{d^\alpha_a}{d x^\alpha_a}\left(\frac{\partial L}{\partial v}[y]\right)(x)
=-\frac{\partial H}{\partial y}\{y\}(x)-p^{(\alpha)}_a(x) = 0
$$
and we can write
\begin{equation}
\label{eq:EL:HF}
\left\{ \begin{array}{l}
\displaystyle y^{(\alpha)}_a(x)=\frac{\partial H}{\partial \psi}\{y\}(x),\\
\\
\displaystyle p^{(\alpha)}_a(x) =-\frac{\partial H}{\partial y}\{y\}(x).
\end{array} \right.
\end{equation}
The system \eqref{eq:EL:HF} is nothing else than the conformable
fractional Euler--Lagrange equation in Hamiltonian form.
Substituting the expressions of \eqref{eq:EL:HF}
into equation \eqref{eq:bef:DR:HF}, we get the analog to the DuBois--Reymond
condition \eqref{duboiscondition} in Hamiltonian form:
\begin{equation}
\label{eq:DR:Ham}
\frac{d^\alpha_a H}{d x^{\alpha}_a}\{y\}(x)
= \frac{\partial H}{\partial x}\{y\}(x)(x-a)^{1-\alpha}.
\end{equation}

If the Lagrangian $L$ is autonomous, \textrm{i.e.},
$L$ does not depend on $x$, then $\frac{\partial L}{\partial x} = 0$
and, consequently, by equation \eqref{eq:DR:Ham} $H$ is a conserved quantity.
If the Lagrangian $L$ does not depend on $y$, then
$\frac{\partial L}{\partial y}=- \frac{\partial H}{\partial y}= 0$
and so $p^{(\alpha)}_a=0$, \textrm{i.e.}, $p$ is a conserved quantity.
We now exhibit Corollary~\ref{corollaryNoether}
within the Hamiltonian framework.

\begin{theorem}[Conformable fractional Noether's theorem in Hamiltonian
form under the presence of an external force $f$]
If $\mathcal{J}$ given by \eqref{funct} is invariant under \eqref{trans},
$y$ is an extremal of $\mathcal{J}$, and function
$f=f(x,y(x),y^{(\alpha)}_a(x))$ satisfies the equation
$$
\frac{d^\alpha_a f}{dx^\alpha_a}(x,y(x),y^{(\alpha)}_a(x))
=(1-\alpha)p(x)\left[\xi (x-a)^{1-2\alpha}
-\frac{y^{(\alpha)}_a(x)\tau}{(x-a)^\alpha}\right]
+\frac{d^\alpha_a\Lambda}{dx^\alpha_a}(x,y(x))(x-a)^{1-\alpha},
$$
then
$$
p(x)\xi (x-a)^{1-\alpha}-H\{y\}(x)\tau-f(x,y(x),y^{(\alpha)}_a(x))
$$
is a conserved quantity.
\end{theorem}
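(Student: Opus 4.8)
The plan is to recognise the claimed conserved quantity as the Hamiltonian rewriting of the one produced by Corollary~\ref{corollaryNoether}, so that essentially no new computation is required. First I would record the Legendre-transform identities already available from \eqref{defP} and \eqref{defH}: along an admissible $y$ one has $p(x)=\frac{\partial L}{\partial y^{(\alpha)}_a}[y](x)$ and, by the definition \eqref{defH} of $H$, $H\{y\}(x)=-L[y](x)+p(x)\,y^{(\alpha)}_a(x)$, hence $L[y](x)-\frac{\partial L}{\partial y^{(\alpha)}_a}[y](x)\,y^{(\alpha)}_a(x)=-H\{y\}(x)$. Substituting these two identities term by term into the expression
\begin{equation*}
\left(L-\frac{\partial L}{\partial y^{(\alpha)}_a}y^{(\alpha)}_a\right)\tau
+ \frac{\partial L}{\partial y^{(\alpha)}_a}\xi (x-a)^{1-\alpha}-f
\end{equation*}
from Corollary~\ref{corollaryNoether} turns it into $-H\{y\}(x)\tau+p(x)\xi(x-a)^{1-\alpha}-f(x,y(x),y^{(\alpha)}_a(x))$, which is exactly the quantity appearing in the statement.

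Next I would observe that the hypothesis imposed on $f$ here is literally the hypothesis of Corollary~\ref{corollaryNoether} after the same substitution $\frac{\partial L}{\partial y^{(\alpha)}_a}=p$: the right-hand side of the defining equation for $\frac{d^\alpha_a f}{dx^\alpha_a}$ in that corollary is $(1-\alpha)\frac{\partial L}{\partial y^{(\alpha)}_a}\left[\xi (x-a)^{1-2\alpha}-\frac{y^{(\alpha)}_a\tau}{(x-a)^{\alpha}}\right]+\frac{d^\alpha_a\Lambda}{dx^\alpha_a}(x-a)^{1-\alpha}$, and replacing $\frac{\partial L}{\partial y^{(\alpha)}_a}$ by $p(x)$ gives precisely the equation assumed in the theorem. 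Hence the two hypotheses coincide, Corollary~\ref{corollaryNoether} applies verbatim, and the displayed combination is a conserved quantity in the sense of Definition~\ref{conservedDef}.

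Should a self-contained Hamiltonian derivation be preferred, I would instead start from the Noether identity \eqref{invar20}, use $L-\frac{\partial L}{\partial y^{(\alpha)}_a}y^{(\alpha)}_a=-H\{y\}$ and $\frac{\partial L}{\partial y^{(\alpha)}_a}=p$ to rewrite its left-hand side as $\frac{d^\alpha_a}{dx^\alpha_a}\left[p\,\xi\,(x-a)^{1-\alpha}-H\{y\}\,\tau\right]$, and then subtract $\frac{d^\alpha_a f}{dx^\alpha_a}$, which by hypothesis equals the right-hand side of \eqref{invar20}; this yields $\frac{d^\alpha_a}{dx^\alpha_a}\left[p(x)\xi(x-a)^{1-\alpha}-H\{y\}(x)\tau-f\right]=0$. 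Either way the only point requiring care is the bookkeeping of signs in the Legendre transformation \eqref{defH} and of the weight $(x-a)^{1-\alpha}$; there is no genuine analytic obstacle, since the substantive work --- the necessary condition of invariance, the conformable Euler--Lagrange equation and the DuBois--Reymond condition --- is already packaged in Theorem~\ref{TeoInv}, equation \eqref{FracELEquation}, condition \eqref{duboiscondition} and Corollary~\ref{corollaryNoether}.
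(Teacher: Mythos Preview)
Your proposal is correct and matches the paper's approach exactly: the paper presents this theorem immediately after the sentence ``We now exhibit Corollary~\ref{corollaryNoether} within the Hamiltonian framework'' and provides no separate proof, so the intended argument is precisely the substitution $p=\partial L/\partial y^{(\alpha)}_a$ and $L-\frac{\partial L}{\partial y^{(\alpha)}_a}y^{(\alpha)}_a=-H$ that you carry out. Your optional self-contained derivation via \eqref{invar20} is also fine and amounts to the same thing.
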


% ----------------------------------------

\section{Conformable fractional optimal control}
\label{sec:FOC}

The conformable fractional optimal control problem is stated as follows:
find a pair of functions $(y(\cdot),v(\cdot))$ that minimizes
\begin{equation}
\label{functOptimal}
\mathcal{J}(y,v)=\int_a^b L(x,y(x),v(x)) \,d^\alpha_ax
\end{equation}
when subject to the (nonautonomous) fractional control system
\begin{equation}
\label{contraintOptimal}
y^{(\alpha)}_a(x)=\varphi (x,y(x),v(x)).
\end{equation}
A pair $(y(\cdot),v(\cdot))$ that minimizes functional \eqref{functOptimal}
subject to \eqref{contraintOptimal} is called an optimal process.
The reader interested on the fractional optimal control theory is referred to
\cite{Frederico:Torres2,Frederico:Torres5,MyID:259}.
Here we note that if $\alpha=1$, then \eqref{functOptimal}--\eqref{contraintOptimal}
is the standard optimal control problem: to minimize
$$
\mathcal{J}(y,v)=\int_a^b L(x,y(x),v(x)) \,dx
$$
subject to the control system
$$
y'(x)=\varphi (x,y(x),v(x)).
$$
We assume that the Lagrangian $L$ and the velocity vector $\varphi$ are functions
at least of class $C^1$ in their domain $[a,b]\times\mathbb{R}^2$.
Also, the admissible state trajectories $y$ are such that $y^{(\alpha)}_a$ exist.

\begin{remark}
In case $\varphi \equiv v$, the previous problem
\eqref{functOptimal}--\eqref{contraintOptimal}
reduces to the fundamental problem of the conformable
fractional variational calculus \eqref{funct},
as stated in Section \ref{sec:EL}.
\end{remark}

Following the standard approach \cite{CD:Djukic:1972,Torres3},
we consider the augmented conformable fractional functional
\begin{equation}
\label{augmfunction}
\mathcal{I}(y,v,p)
=\int_a^b [ L(x,y(x),v(x))+p(x)(y^{(\alpha)}_a(x)
-\varphi (x,y(x),v(x)))] \,d^\alpha_a x,
\end{equation}
where $p$ is such that $p^{(\alpha)}_a$ exists.
Consider a variation vector of type
$(y+\epsilon y_1,v+\epsilon v_1,p+\epsilon p_1)$
with $|\epsilon| \ll 1$. For convenience,
we restrict ourselves to the case $y_1(a)=y_1(b)=0$.
If $(y(\cdot),v(\cdot))$ is an optimal process,
then the first variation is zero when $\epsilon=0$.
Thus, using the conformable fractional integration
by parts formula (Theorem \ref{Ta3}), we obtain that
$$
\begin{array}{ll}
0&=\displaystyle\int_a^b \left[ \frac{\partial L}{\partial y}y_1
+ \frac{\partial L}{\partial v}v_1 +p_1 (y^{(\alpha)}_a-\varphi)
+p\left({y_1}^{(\alpha)}_a-\frac{\partial\varphi}{\partial y}y_1
- \frac{\partial\varphi}{\partial v}v_1\right) \right] \,d^\alpha_a x\\
&=\displaystyle\int_a^b \left[ y_1 \left( \frac{\partial L}{\partial y}
-p \frac{\partial\varphi}{\partial y}- p^{(\alpha)}_a \right)
+v_1\left( \frac{\partial L}{\partial v}
-p \frac{\partial\varphi}{\partial v} \right)
+ p_1 (y^{(\alpha)}_a-\varphi) \right] \,d^\alpha_a x.
\end{array}
$$
By the arbitrariness of the the variation functions,
we obtain the following system, called the Euler--Lagrange
equations for the conformable fractional optimal control problem:
\begin{equation}
\label{optimalsystem}
\left\{\begin{array}{l}
y^{(\alpha)}_a(x)=\varphi(x,y(x),v(x)),\\
p^{(\alpha)}_a(x)=\displaystyle\frac{\partial L}{\partial y}(x,y(x),v(x))
-p(x) \frac{\partial\varphi}{\partial y}(x,y(x),v(x)),\\
\displaystyle\frac{\partial L}{\partial v}(x,y(x),v(x))
-p (x)\frac{\partial\varphi}{\partial v}(x,y(x),v(x)) =0.
\end{array}\right.
\end{equation}
These equations give necessary conditions for finding the optimal solutions
of problem \eqref{functOptimal}--\eqref{contraintOptimal}.
We remark that they are similar to the standard ones, in case of
integer order derivatives, but in this case they
contain conformable fractional derivatives, as expected. The solution can be
stated using the Hamiltonian formalism. Consider the Hamiltonian function
\begin{equation}
\label{hamilDef}
H(x,y,v,p)=-L(x,y,v)+p(x) \varphi(x,y,v).
\end{equation}
Then \eqref{optimalsystem} gives:
\begin{enumerate}
\item the fractional Hamiltonian system
\begin{equation}
\label{Hamil1}
\left\{ \begin{array}{l}
\displaystyle y^{(\alpha)}_a(x)=\frac{\partial H}{\partial p}(x,y,v,p),\\[0.3cm]
\displaystyle p^{(\alpha)}_a(x)=-\frac{\partial H}{\partial y}(x,y,v,p);\\
\end{array} \right.
\end{equation}
\item the stationary condition
\begin{equation}
\label{Hamil2}
\frac{\partial H}{\partial v}(x,y,v,p)=0.
\end{equation}
\end{enumerate}

\begin{definition}
Any triplet $(y,v,p)$ satisfying system \eqref{Hamil1} and equation
\eqref{Hamil2} is called a conformable fractional Pontryagin extremal.
\end{definition}

\begin{remark}
In the particular case $\varphi \equiv v$, \textrm{i.e.},
when the conformable fractional optimal control problem is reduced
to the fundamental conformable fractional problem of the calculus of variations,
we obtain
$$
H=-L(x,y,v)+pv \, , \quad y^{(\alpha)}_a=v \, ,
$$
and the equations
$$
p^{(\alpha)}_a=-\frac{\partial H}{\partial y}
=\frac{\partial L}{\partial y}\, , \quad p=\frac{\partial L}{\partial v}.
$$
Therefore, we obtain the conformable
fractional Euler--Lagrange equation \eqref{FracELEquation}:
$$
\frac{\partial L}{\partial y}=\frac{d^\alpha_a}{dx^\alpha_a}\left(
\frac{\partial L}{\partial y^{(\alpha)}_a}\right).
$$
\end{remark}
Let us now considerer the augmented fractional variational functional
\eqref{augmfunction} written in the Hamiltonian form:
\begin{equation}
\label{fafh}
\mathcal{I}(y,v,p)=\int_0^1 (-H(x,y(x),v(x),p(x))
+p(x)y^{(\alpha)}_a(x)) \,d^\alpha_a x,
\end{equation}
where $H$ is given by expression \eqref{hamilDef}.
For a parameter $\epsilon$, with $|\epsilon| \ll 1$,
consider the family of transformations
\begin{equation}
\label{trans4}
\left\{
\begin{array}{l}
\overline x=x+\epsilon \tau (x,y(x),v(x),p(x)),\\
\overline y=y+\epsilon \xi (x,y(x),v(x),p(x)),\\
\overline v=v+\epsilon \sigma (x,y(x),v(x),p(x)),\\
\overline p=p+\epsilon \pi (x,y(x),v(x),p(x)).
\end{array}\right.
\end{equation}

We now define the notion of invariance of
\eqref{functOptimal}--\eqref{contraintOptimal}
in terms of the Hamiltonian $H$ and the augmented conformable
fractional variational functional \eqref{fafh}.

\begin{definition}
\label{def:inv}
The conformable fractional optimal control problem
\eqref{functOptimal}--\eqref{contraintOptimal}
is invariant under the transformations \eqref{trans4}
up to the Gauge term $\Lambda$, if a function $\Lambda=\Lambda(x,y)$
exists such that for any functions $y,v$ and $p$, and for any real
$x\in[0,1]$, the following equality holds:
\begin{equation}
\label{invar3}
\left[-H\left(\overline x,\overline y,\overline v,\overline p\right)
+\overline p \frac{d^\alpha_a\overline y}{d\overline x^\alpha_a}\right]
\frac{d^\alpha_a\overline x}{d^\alpha_a x}
=-H(x,y,v,p)+py^{(\alpha)}_a+\epsilon
\frac{d^\alpha_a\Lambda}{dx^\alpha_a}(x,y)+ o(\epsilon)
\end{equation}
for all $\epsilon$ in some neighborhood of zero,
where as in Definition~\ref{invariantDef}
$\frac{d^\alpha_a\overline x}{d^\alpha_a x}$
stands for \eqref{eq:standsfor}.
\end{definition}

\begin{theorem}[Fractional Noether's theorem for the fractional
optimal control problem \eqref{functOptimal}--\eqref{contraintOptimal}]
\label{fracNoether}
If \eqref{functOptimal}--\eqref{contraintOptimal} is invariant
under \eqref{trans4} in the sense of Definition~\ref{def:inv},
and if $(y,v,p)$ is a conformable fractional Pontryagin extremal, then
\begin{equation}
\label{invar4}
\frac{d^\alpha_a}{dx^\alpha_a}(p\xi)-\tau\left(\frac{\partial H}{\partial x}
+(\alpha-1)\frac{p y^{(\alpha)}_a}{x-a} \right)
-H\frac{\tau^{(\alpha)}_a}{(x-a)^{1-\alpha}}
=\frac{d^\alpha_a\Lambda}{dx^\alpha_a}.
\end{equation}
\end{theorem}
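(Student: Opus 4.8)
The plan is to follow the template of the proof of Theorem~\ref{TeoInv}: insert the transformed quantities into the invariance identity \eqref{invar3}, differentiate with respect to $\epsilon$, set $\epsilon=0$, and then simplify the resulting necessary condition using the fact that $(y,v,p)$ is a conformable fractional Pontryagin extremal.

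First I would write down the two ingredients entering the left-hand side of \eqref{invar3}. The factor $\frac{d^\alpha_a\overline x}{d^\alpha_a x}$ is, by \eqref{eq:standsfor}, equal to $1+\epsilon\,\tau^{(\alpha)}_a/(x-a)^{1-\alpha}$, and the fractional chain rule of Proposition~\ref{Ra2}, exactly as in the proof of Theorem~\ref{TeoInv}, gives
\[
\frac{d^\alpha_a\overline y}{d\overline x^\alpha_a}
=\frac{y^{(\alpha)}_a+\epsilon\,\xi^{(\alpha)}_a}
{(x+\epsilon\tau-a)^{\alpha-1}\bigl[(x-a)^{1-\alpha}+\epsilon\,\tau^{(\alpha)}_a\bigr]}.
\]
This quotient equals $y^{(\alpha)}_a$ at $\epsilon=0$, and differentiating it at $\epsilon=0$ --- the $\epsilon$-derivative of $(x+\epsilon\tau-a)^{\alpha-1}$ being the source of the term $(\alpha-1)\tau/(x-a)$ --- I expect to obtain
\[
\left.\frac{d}{d\epsilon}\right|_{\epsilon=0}\frac{d^\alpha_a\overline y}{d\overline x^\alpha_a}
=\xi^{(\alpha)}_a-y^{(\alpha)}_a\left[(\alpha-1)\frac{\tau}{x-a}+\frac{\tau^{(\alpha)}_a}{(x-a)^{1-\alpha}}\right].
\]

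Next I would differentiate the whole of \eqref{invar3} in $\epsilon$ and evaluate at $\epsilon=0$. Writing $H$ and its partial derivatives at $(x,y,v,p)$, the term $-H(\overline x,\overline y,\overline v,\overline p)$ contributes $-\frac{\partial H}{\partial x}\tau-\frac{\partial H}{\partial y}\xi-\frac{\partial H}{\partial v}\sigma-\frac{\partial H}{\partial p}\pi$, the term $\overline p\,\frac{d^\alpha_a\overline y}{d\overline x^\alpha_a}$ contributes $\pi\,y^{(\alpha)}_a$ plus $p$ times the derivative displayed above, and differentiating the outer factor $\frac{d^\alpha_a\overline x}{d^\alpha_a x}$ contributes $\bigl(-H+p\,y^{(\alpha)}_a\bigr)\tau^{(\alpha)}_a/(x-a)^{1-\alpha}$; the right-hand side produces just $\frac{d^\alpha_a\Lambda}{dx^\alpha_a}$. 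This is the raw necessary condition of invariance for \eqref{functOptimal}--\eqref{contraintOptimal}, the Hamiltonian analogue of \eqref{invar2}.

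Finally I would impose that $(y,v,p)$ is a Pontryagin extremal. By the stationary condition \eqref{Hamil2}, $\frac{\partial H}{\partial v}=0$, so the $\sigma$-term vanishes; by the first Hamiltonian equation in \eqref{Hamil1}, $\frac{\partial H}{\partial p}=y^{(\alpha)}_a$, so $-\frac{\partial H}{\partial p}\pi+\pi\,y^{(\alpha)}_a=0$ and the $\pi$-term vanishes; by the second, $\frac{\partial H}{\partial y}=-p^{(\alpha)}_a$, so $-\frac{\partial H}{\partial y}\xi=p^{(\alpha)}_a\xi$. Collecting the remaining terms, the two copies of $p\,y^{(\alpha)}_a\,\tau^{(\alpha)}_a/(x-a)^{1-\alpha}$ cancel, and invoking the product rule of Proposition~\ref{Ra2} in the form $p^{(\alpha)}_a\xi+p\,\xi^{(\alpha)}_a=\frac{d^\alpha_a}{dx^\alpha_a}(p\xi)$ turns what is left into precisely \eqref{invar4}. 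The one place requiring care is the $\epsilon$-derivative of the quotient defining $\frac{d^\alpha_a\overline y}{d\overline x^\alpha_a}$, in particular the sign and the $(x-a)^{-1}$ factor in the $(\alpha-1)\tau$ contribution; the rest is bookkeeping, and, unlike in the variational Noether theorem, the conformable DuBois--Reymond identity \eqref{eq:DR:Ham} is not needed for this pre-conservation-law form.
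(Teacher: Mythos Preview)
Your proposal is correct and follows essentially the same route as the paper: differentiate \eqref{invar3} in $\epsilon$ at $\epsilon=0$ to get the raw invariance identity, then use \eqref{Hamil1}--\eqref{Hamil2} to eliminate the $\sigma$- and $\pi$-terms and the product rule to assemble $\frac{d^\alpha_a}{dx^\alpha_a}(p\xi)$. Your write-up is in fact more explicit than the paper's, which states the differentiated identity and then simply asserts that \eqref{invar4} follows from the Pontryagin extremal conditions without spelling out the cancellations.
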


\begin{proof}
Differentiating \eqref{invar3} with respect to $\epsilon$,
and choosing $\epsilon=0$, we get
\begin{multline*}
-\frac{\partial H}{\partial x}\tau-\frac{\partial H}{\partial y}\xi
-\frac{\partial H}{\partial v}\sigma
-\frac{\partial H}{\partial p}\pi+\pi y^{(\alpha)}_a\\
+p\left[ \xi^{(\alpha)}_a-y^{(\alpha)}_a\left((\alpha-1)\frac{\tau}{x-a}
+\frac{\tau^{(\alpha)}_a}{(x-a)^{1-\alpha}}\right) \right]
+\left[-H+py^{(\alpha)}_a\right]\frac{\tau^{(\alpha)}_a}{(x-a)^{1-\alpha}}
=\frac{d^\alpha_a\Lambda}{dx^\alpha_a}.
\end{multline*}
Equation \eqref{invar4} follows because $(y,v,p)$
is a conformable fractional Pontryagin extremal.
\end{proof}

\begin{remark}
When $\alpha=1$ and $\Lambda=0$, equation \eqref{invar4} becomes
$$
\frac{d}{dx}(p\xi)-\tau\frac{\partial H}{\partial x}-H\tau'=0.
$$
Using relations \eqref{Hamil1} and \eqref{Hamil2}
with $\alpha=1$, we deduce that
$$
-H\tau+p\xi \equiv \, \mbox{constant},
$$
which is the optimal control version of Noether's theorem
\cite{Torres,Torres1,Torres3}. For $\alpha\in(0,1)$,
Theorem~\ref{fracNoether} extends
the main result of \cite{Frederico:Torres2}.
\end{remark}

% ----------------------------------------

\section{The multi-dimensional case}
\label{sec:MultiDim}

In this section we show a necessary condition of invariance, when the
Lagrangian depends on two independent variables $x_1$ and $x_2$ and on $m$
functions $y_1,\ldots,y_m$. First, we define conformable fractional partial
derivatives and conformable multiple fractional integrals in a natural way,
similarly as done in the integer case. In addition,
we are going to use the following properties.

\begin{theorem}[The conformable Green's theorem for a rectangle]
\label{greent}
Let $f$ and $g$ be two continuous and $\alpha$-differentiable functions
whose domains contain $R=[a,b]\times[c,d]\subset \mathbb{R}^2$. Then
\begin{multline}
\label{green}
\int_a^b\left(f(x_1,c)-f(x_1,d)\right)d^\alpha_ax_1
+\int_c^d\left(g(b,x_2)-g(a,x_2)\right)d^\alpha_cx_2\\
=\int_R  \left(\frac{\partial ^\alpha_a }{\partial {x_1}^\alpha_a} g(x_1,x_2)
-\frac{\partial ^\alpha_c }{\partial {x_2}^\alpha_c} f(x_1,x_2)\right)
d^\alpha_a x_1d^\alpha_c x_2.
\end{multline}
\end{theorem}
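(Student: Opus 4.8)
The plan is to reduce the two-dimensional identity \eqref{green} to the one-dimensional fundamental theorem of the conformable fractional calculus (Theorem~\ref{Ta2}), applied in each variable separately, with the order of integration interchanged by Fubini's theorem. The key observation is that, by Definition~\ref{Da2}, the conformable double integral on the right-hand side of \eqref{green} is just the ordinary Lebesgue integral
$$
\int_R \Bigl(\tfrac{\partial^\alpha_a}{\partial {x_1}^\alpha_a}g-\tfrac{\partial^\alpha_c}{\partial {x_2}^\alpha_c}f\Bigr)\frac{dx_1\,dx_2}{(x_1-a)^{1-\alpha}(x_2-c)^{1-\alpha}},
$$
whose weight factorizes as a product of a function of $x_1$ and a function of $x_2$; since $0<\alpha\le1$, each factor $(x-a)^{\alpha-1}$ is integrable on the corresponding interval, so Fubini applies and we may iterate the integral in whichever order is convenient.

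First I would split the right-hand integral into the $g$-part and the $f$-part. For the $g$-part, I write it as the iterated integral $\int_c^d\bigl(\int_a^b\frac{\partial^\alpha_a}{\partial {x_1}^\alpha_a}g(x_1,x_2)\,d^\alpha_ax_1\bigr)d^\alpha_cx_2$. For fixed $x_2$, the inner integral is $I^\alpha_a$ applied to the conformable $x_1$-derivative of $x_1\mapsto g(x_1,x_2)$, which by Theorem~\ref{Ta2} equals $g(b,x_2)-g(a,x_2)$. Hence the $g$-part equals $\int_c^d\bigl(g(b,x_2)-g(a,x_2)\bigr)d^\alpha_cx_2$, the second term on the left of \eqref{green}. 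Symmetrically, for the $f$-part I iterate in the other order, $\int_a^b\bigl(\int_c^d\frac{\partial^\alpha_c}{\partial {x_2}^\alpha_c}f(x_1,x_2)\,d^\alpha_cx_2\bigr)d^\alpha_ax_1$, and Theorem~\ref{Ta2} in the $x_2$ variable gives the inner integral as $f(x_1,d)-f(x_1,c)$; the leading minus sign then turns this into $\int_a^b\bigl(f(x_1,c)-f(x_1,d)\bigr)d^\alpha_ax_1$, the first term on the left of \eqref{green}. Adding the two parts yields \eqref{green}.

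The main obstacle is not the computation but checking the hypotheses that make the two tools legitimate simultaneously. On one side, one must justify the application of Fubini: this is exactly where $0<\alpha\le1$ enters, guaranteeing $(x-a)^{\alpha-1}\in L^1$, and where continuity of $f,g$ together with continuity of their conformable partial derivatives is used to bound the integrand. On the other side, Theorem~\ref{Ta2} is stated for $C^1$ functions, so strictly one should either strengthen the regularity assumption on $f$ and $g$ (e.g.\ assume $\partial g/\partial x_1$ and $\partial f/\partial x_2$ exist and are continuous, so that by Remark~\ref{Ra1} $\frac{\partial^\alpha_a}{\partial {x_1}^\alpha_a}g=(x_1-a)^{1-\alpha}\partial g/\partial x_1$ and similarly for $f$), or re-derive the needed one-variable identity under the weaker "$\alpha$-differentiable" hypothesis. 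Under the $C^1$ reading, the one-variable fundamental theorem applies verbatim with the other variable frozen, since the conformable weight in each variable does not involve the other, and the argument above goes through.
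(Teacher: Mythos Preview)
Your proposal is correct and follows essentially the same approach as the paper: both arguments reduce the identity to the one-variable fundamental theorem of conformable calculus (Theorem~\ref{Ta2}) applied separately in $x_1$ and in $x_2$, with the double integral treated as an iterated integral. The only cosmetic difference is the direction of the computation---the paper starts from the boundary terms on the left, applies Theorem~\ref{Ta2} to rewrite $f(x_1,d)-f(x_1,c)$ and $g(b,x_2)-g(a,x_2)$ as inner conformable integrals, and then assembles the double integral, whereas you start from the double integral and peel off the inner integrals; your explicit discussion of Fubini and of the $C^1$ regularity needed for Theorem~\ref{Ta2} is a welcome addition that the paper leaves implicit.
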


\begin{proof}
By Theorem~\ref{Ta2}, we have
\begin{equation*}
\begin{split}
f(x_1,d)-f(x_1,c)
&=\int_c^d\frac{\partial ^\alpha_c }{\partial
{x_2}^\alpha_c} f(x_1,x_2)d^\alpha_cx_2,\\
g(b,x_2)-g(a,x_2)
&=\int_a^b\frac{\partial ^\alpha_a }{\partial {x_1}^\alpha_a}
g(x_1,x_2)d^\alpha_ax_1.
\end{split}
\end{equation*}
Therefore,
\begin{equation*}
\begin{split}
\int_a^b &\left(f(x_1,c)-f(x_1,d)\right)d^\alpha_ax_1
+\int_c^d\left(g(b,x_2)-g(a,x_2)\right)d^\alpha_cx_2\\
&=-\int_a^b\int_c^d\frac{\partial ^\alpha_c }{\partial {x_2}^\alpha_c}
f(x_1,x_2)d^\alpha_cx_2d^\alpha_ax_1+\int_c^d\int_a^b
\frac{\partial ^\alpha_a }{\partial {x_1}^\alpha_a}
g(x_1,x_2)d^\alpha_ax_1d^\alpha_cx_2\\
&=\int_R  \left(\frac{\partial ^\alpha_a }{\partial {x_1}^\alpha_a} g(x_1,x_2)
-\frac{\partial ^\alpha_c }{\partial {x_2}^\alpha_c}
f(x_1,x_2)\right)d^\alpha_a x_1d^\alpha_c x_2.
\end{split}
\end{equation*}
The proof is complete.
\end{proof}

\begin{remark}
From Definition~\ref{Da2} and Remark~\ref{Ra1}, it is easy to verify that 
for $C^1$ functions our fractional Green's theorem over a rectangular domain 
(Theorem~\ref{greent}) reduces to the conventional Green's identity for 
$\tilde{f}(x_1,x_2)=f(x_1,x_2)(x_1-a)^{\alpha-1}$
and $\tilde{g}(x_1,x_2)=g(x_1,x_2)(x_2-a)^{\alpha-1}$.
\end{remark}

\begin{lemma}
Let $F$, $G$ and $h$ be continuous and $\alpha$-differentiable functions
whose domains contain $R=[a,b]\times[c,d]$. If $h=0$
on the boundary $\partial R$ of $R$, then
\begin{multline}
\label{lemma1}
\int_R  \left(G(x_1,x_2)\frac{\partial ^\alpha_a }{\partial {x_1}^\alpha_a}
h(x_1,x_2)-F(x_1,x_2)\frac{\partial ^\alpha_c }{\partial {x_2}^\alpha_c}
h(x_1,x_2)\right)d^\alpha_a x_1d^\alpha_c x_2\\
=-\int_R\left(\frac{\partial ^\alpha_a }{\partial {x_1}^\alpha_a} G(x_1,x_2)
-\frac{\partial ^\alpha_c }{\partial {x_2}^\alpha_c} F(x_1,x_2) \right)
h(x_1,x_2)d^\alpha_a x_1d^\alpha_c x_2.
\end{multline}
\end{lemma}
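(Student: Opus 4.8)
The plan is to derive \eqref{lemma1} from the conformable Green's theorem for a rectangle (Theorem~\ref{greent}) by feeding it the products $f=Fh$ and $g=Gh$, and then expanding the conformable partial derivatives that appear on the right-hand side of \eqref{green} via the Leibniz rule.

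First I would check that $Fh$ and $Gh$ are admissible for Theorem~\ref{greent}, i.e.\ continuous and $\alpha$-differentiable on a neighborhood of $R$. Continuity is clear. For $\alpha$-differentiability, note that the limit definition \eqref{a1} (and its right-sided analogue) acts in a single variable at a time, so the Leibniz rule of Proposition~\ref{Ra2}(ii) carries over verbatim to conformable partial derivatives; in particular
$$
\frac{\partial^\alpha_a}{\partial {x_1}^\alpha_a}(Gh)=\left(\frac{\partial^\alpha_a}{\partial {x_1}^\alpha_a}G\right)h+G\,\frac{\partial^\alpha_a}{\partial {x_1}^\alpha_a}h,\qquad
\frac{\partial^\alpha_c}{\partial {x_2}^\alpha_c}(Fh)=\left(\frac{\partial^\alpha_c}{\partial {x_2}^\alpha_c}F\right)h+F\,\frac{\partial^\alpha_c}{\partial {x_2}^\alpha_c}h.
$$

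Next I would apply Theorem~\ref{greent} with this $f$ and $g$. On the edge $x_2=c$ of $\partial R$ we have $f(x_1,c)=F(x_1,c)h(x_1,c)=0$, and likewise $f(x_1,d)=0$, $g(a,x_2)=0$ and $g(b,x_2)=0$, since $h$ vanishes on $\partial R$ by hypothesis. Hence the entire left-hand side of \eqref{green} is zero, which gives
$$
\int_R\left(\frac{\partial^\alpha_a}{\partial {x_1}^\alpha_a}(Gh)-\frac{\partial^\alpha_c}{\partial {x_2}^\alpha_c}(Fh)\right)d^\alpha_a x_1\,d^\alpha_c x_2=0.
$$
Substituting the two Leibniz expansions above into the integrand, separating the terms that multiply $\frac{\partial^\alpha_a}{\partial {x_1}^\alpha_a}h$ and $\frac{\partial^\alpha_c}{\partial {x_2}^\alpha_c}h$ from the terms that multiply $h$, and transposing the latter to the other side produces exactly \eqref{lemma1}.

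There is essentially no hard step here; the argument is the conformable counterpart of the classical derivation of Green's identity from Green's theorem. The only points that deserve a line of justification are that the product rule indeed passes to conformable \emph{partial} derivatives (which is why the hypotheses are stated with $F$, $G$, $h$ all $\alpha$-differentiable rather than merely $C^1$) and that these same hypotheses are precisely what makes $Fh$ and $Gh$ eligible inputs to Theorem~\ref{greent}.
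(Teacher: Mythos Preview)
Your proposal is correct and follows essentially the same approach as the paper: apply the conformable Green's theorem with $f=Fh$ and $g=Gh$, use $h=0$ on $\partial R$ to kill the boundary terms, expand the right-hand side by the Leibniz rule, and rearrange. The paper's proof is terser (it writes out the Green identity with $Fh$ and $Gh$ inserted and then invokes the boundary hypothesis), but the logic is identical.
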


\begin{proof}
By choosing $f=Fh$ and $g=Gh$ in Green's formula \eqref{green}, we obtain that
\begin{equation*}
\begin{split}
\int_a^b & \left(F(x_1,c)h(x_1,c)-F(x_1,d)h(x_1,d)\right)d^\alpha_ax_1
+\int_c^d\left(G(b,x_2)g(b,x_2)-G(a,x_2)h(a,x_2)\right)d^\alpha_cx_2 \\
&=\int_R  \left(\frac{\partial ^\alpha_a }{\partial {x_1}^\alpha_a} G(x_1,x_2)
-\frac{\partial ^\alpha_c }{\partial {x_2}^\alpha_c}
F(x_1,x_2)\right)h(x_1,x_2)d^\alpha_a x_1d^\alpha_c x_2\\
& \quad +\int_R  \left(G(x_1,x_2)\frac{\partial^\alpha_a }{\partial
{x_1}^\alpha_a} h(x_1,x_2) -F(x_1,x_2)\frac{\partial^\alpha_c}{\partial
{x_2}^\alpha_c} h(x_1,x_2)\right)d^\alpha_a x_1d^\alpha_c x_2.
\end{split}
\end{equation*}
Since $h=0$ on the boundary $\partial R$ of $R$, we have
\begin{multline*}
\int_R \left(G(x_1,x_2)\frac{\partial^\alpha_a}{\partial {x_1}^\alpha_a}
h(x_1,x_2) -F(x_1,x_2)\frac{\partial^\alpha_c}{\partial {x_2}^\alpha_c}
h(x_1,x_2)\right)d^\alpha_a x_1d^\alpha_c x_2\\
=-\int_R  \left(\frac{\partial^\alpha_a}{\partial {x_1}^\alpha_a} G(x_1,x_2)
-\frac{\partial ^\alpha_c }{\partial {x_2}^\alpha_c}
F(x_1,x_2)\right)h(x_1,x_2)d^\alpha_a x_1d^\alpha_c x_2.
\end{multline*}
The proof is complete.
\end{proof}

\begin{remark}
In the very recent and general paper \cite{MR3428118}, 
a vector calculus with deformed derivatives 
(as the conformable derivative) 
is formally introduced. We refer the reader to 
\cite{MR3428118} for a detailed discussion 
of a vector calculus with deformed derivatives
and more properties on the multi-dimensional
conformable calculus.
\end{remark}

Let us consider now the fractional variational integral
\begin{equation}
\label{funct2}
\mathcal{J}(y)
=\int_R L\left(x,y,\frac{\partial^\alpha_a y}{\partial
x^\alpha_a}\right)d^\alpha_a x,
\end{equation}
where for simplicity we choose $R=[a,b] \times [a,b]$,
and where $x=(x_1,x_2)$, $y=(y_1,\ldots,y_m)$,
$d^\alpha_a x=d^\alpha_a x_1d^\alpha_a x_2$, and
$$
\frac{\partial ^\alpha_a y}{\partial x^\alpha_a}
=\left( \frac{\partial ^\alpha_a y_1}{\partial {x_1}^\alpha_a},
\frac{\partial ^\alpha_a y_1}{\partial {x_2}^\alpha_a},\ldots,
\frac{\partial ^\alpha_a y_m}{\partial {x_1}^\alpha_a},
\frac{\partial ^\alpha_a y_m}{\partial {x_2}^\alpha_a} \right).
$$
We are assuming that $L=L(x_1,x_2,y_1,\ldots,y_m,v_{1,1},v_{1,2},
\ldots,v_{m,1},v_{m,2})$ is at least of class $C^1$,
that the domains of $y_k,\, k\in\{1,\ldots,m\}$ contain $R$,
and that all these partial conformable fractional derivatives exist.

\begin{theorem}[The multi-dimensional fractional Euler--Lagrange equation]
Let $y$ be an extremizer of \eqref{funct2} with $y|_{\partial R}=\psi(x_1,x_2)$
for some given function $\psi=(\psi_1,\ldots,\psi_m)$. Then,
the following equation holds:
\begin{equation}
\label{FracELEquation2}
\frac{\partial L}{\partial y_k}-\frac{\partial^\alpha_a}{\partial {x_1}^\alpha_a}\left(
\frac{\partial L}{\partial v_{k,1}}\right)-\frac{\partial^\alpha_a}{\partial
{x_2}^\alpha_a}\left( \frac{\partial L}{\partial v_{k,2}}\right)=0
\end{equation}
for all $k\in\{1,\ldots,m\}$.
\end{theorem}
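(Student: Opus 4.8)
The plan is to mimic the one-dimensional proof of Theorem~\ref{Tb1}, replacing the single conformable integration by parts with the multi-dimensional Lemma~\eqref{lemma1} built on the conformable Green's theorem (Theorem~\ref{greent}). First I would let $y^{\ast}=(y^{\ast}_1,\ldots,y^{\ast}_m)$ be an extremizer of \eqref{funct2} and introduce, for each fixed index $k$, the one-parameter family of admissible competitors $y_k = y^{\ast}_k + \epsilon\, h_k$, leaving the other components unchanged, where $h_k$ is an arbitrary $\alpha$-differentiable function on $R$ vanishing on $\partial R$ (so that the boundary condition $y|_{\partial R}=\psi$ is preserved). Computing the first variation $\delta\mathcal{J}(y^{\ast})$ via the chain rule for functions of several variables \eqref{a14} applied in each direction $x_1$ and $x_2$, and noting that $\frac{\partial^\alpha_a}{\partial x_i^\alpha_a}(y^{\ast}_k+\epsilon h_k) = {y^{\ast}_k}^{(\alpha)} + \epsilon\, \frac{\partial^\alpha_a h_k}{\partial x_i^\alpha_a}$, one obtains
\begin{equation*}
\delta\mathcal{J}(y^{\ast}) = \int_R \left( \frac{\partial L}{\partial y_k} h_k
+ \frac{\partial L}{\partial v_{k,1}}\frac{\partial^\alpha_a h_k}{\partial {x_1}^\alpha_a}
+ \frac{\partial L}{\partial v_{k,2}}\frac{\partial^\alpha_a h_k}{\partial {x_2}^\alpha_a} \right) d^\alpha_a x_1 d^\alpha_a x_2 = 0.
\end{equation*}

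Next I would apply the multi-dimensional integration-by-parts Lemma~\eqref{lemma1} with the choices $G = \frac{\partial L}{\partial v_{k,1}}$, $F = -\frac{\partial L}{\partial v_{k,2}}$, and $h = h_k$; since $h_k$ vanishes on $\partial R$, the lemma converts the two derivative-on-$h_k$ terms into
\begin{equation*}
-\int_R \left( \frac{\partial^\alpha_a}{\partial {x_1}^\alpha_a}\frac{\partial L}{\partial v_{k,1}}
+ \frac{\partial^\alpha_a}{\partial {x_2}^\alpha_a}\frac{\partial L}{\partial v_{k,2}} \right) h_k \, d^\alpha_a x_1 d^\alpha_a x_2,
\end{equation*}
so the first variation collapses to a single integral against $h_k$ of the quantity appearing on the left-hand side of \eqref{FracELEquation2}. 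Finally, invoking a two-dimensional version of the fundamental lemma (Lemma~\ref{Flemma}) — which follows by the same bump-function argument, localizing near an interior point where the integrand is assumed nonzero and using that the weight $(x_1-a)^{\alpha-1}(x_2-a)^{\alpha-1}$ is strictly positive on the interior of $R$ — one concludes that \eqref{FracELEquation2} holds for that $k$. Since $k\in\{1,\ldots,m\}$ was arbitrary, the proof is complete.

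The main technical point to be careful about is the bookkeeping in the sign and index matching when feeding $\frac{\partial L}{\partial v_{k,1}}$ and $\frac{\partial L}{\partial v_{k,2}}$ into Lemma~\eqref{lemma1}: the lemma is stated with an asymmetry (a ``$G \,\partial_{x_1} h - F\,\partial_{x_2} h$'' pairing producing ``$\partial_{x_1} G - \partial_{x_2} F$''), so one must set $F=-\partial L/\partial v_{k,2}$ to recover the symmetric sum of the two conformable divergence terms with the correct (negative) overall sign. A second, more routine, point is justifying that one may vary each component $y_k$ independently while holding the others fixed — this is immediate since the admissible class is a product over the components — and that the resulting variations $h_k$ ranging over all $\alpha$-differentiable functions vanishing on $\partial R$ form a rich enough test class for the two-dimensional fundamental lemma. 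Neither step presents a genuine obstacle; the work is essentially assembling Theorem~\ref{greent}, Lemma~\eqref{lemma1}, and the chain rule \eqref{a14} in the right order.
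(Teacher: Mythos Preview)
Your proposal is correct and follows essentially the same approach as the paper: compute the first variation, apply the multi-dimensional integration-by-parts Lemma~\eqref{lemma1} to transfer the conformable derivatives off the variation, and invoke the fundamental lemma. The only cosmetic differences are that the paper perturbs all $m$ components simultaneously (writing the variation as a sum over $k$) rather than one at a time, and that you are more explicit than the paper about the sign choice $F=-\partial L/\partial v_{k,2}$ and about the need for a two-dimensional version of Lemma~\ref{Flemma}.
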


\begin{proof}
Let $y^{\ast}=(y_1^{\ast},\ldots,y_m^{\ast})$ give an extremum to \eqref{funct2}.
We define $m$ families of functions
\begin{equation}
\label{proof2}
y_k(x_1,x_2)=y_k^{\ast}(x_1,x_2)+\epsilon\eta_k(x_1,x_2),
\end{equation}
where $k\in \{1,\ldots,m\}$, $\epsilon$ is a constant, and $\eta_k$
is an arbitrary $\alpha$-differentiable function satisfying the boundary
conditions $\eta_k|_{\partial R}=0$ (weak variations). From \eqref{proof2},
the boundary conditions $\eta_k|_{\partial R}=0$ and
$y_k|_{\partial R}=\psi_k(x_1,x_2)$, it follows that function $y_k$ is admissible.
Let the Lagrangian $L$ be $C^1$. Because $y^{\ast}$ is an extremizer of functional
$\mathcal{J}$, the Gateaux derivative $\delta \mathcal{J}(y^{\ast})$ needs
to be identically null. For the functional \eqref{funct2},
\begin{equation*}
\begin{split}
\delta \mathcal{J}(y^{\ast})
&=\lim_{\epsilon\rightarrow 0}
\frac{1}{\epsilon}\left( \int_R L\left(x,y,\frac{\partial^\alpha_a y}{\partial
x^\alpha_a}\right) \,d^\alpha_ax
-\int_R L\left(x,y^{\ast},\frac{\partial^\alpha_a y^{\ast}}{\partial
x^\alpha_a}\right) \,d^\alpha_ax\right)\\
&=\sum_{k=1}^m\int_R \left(\eta_k(x_1,x_2)\frac{\partial
L\left(x,y^{\ast},\frac{\partial^\alpha_a y^{\ast}}{\partial
x^\alpha_a}\right)}{\partial y_k^{\ast}}\right.\\
&\left. +\frac{\partial^\alpha_a}{\partial {x_1}^\alpha_a}\eta_k(x_1,x_2)
\frac{\partial L\left(x,y^{\ast},\frac{\partial^\alpha_a y^{\ast}}{\partial
x^\alpha_a}\right)}{\partial v_{k,1}} +\frac{\partial^\alpha_a}{\partial
{x_2}^\alpha_a}\eta_k(x_1,x_2)\frac{\partial L\left(x,y^{\ast},
\frac{\partial^\alpha_a y^{\ast}}{\partial x^\alpha_a}\right)}{\partial
v_{k,2}}\right)d^\alpha_ax=0.
\end{split}
\end{equation*}
Using \eqref{lemma1}, we get that
\begin{equation}
\label{proof4}
\sum_{k=1}^m\int_R \eta_k(x_1,x_2)\left(\frac{\partial L\left(x,y^{\ast},
\frac{\partial^\alpha_a y^{\ast}}{\partial x^\alpha_a}\right)}{\partial y_k^{\ast}}
-\frac{\partial^\alpha_a}{\partial {x_1}^\alpha_a}\frac{\partial L\left(x,y^{\ast},
\frac{\partial^\alpha_a y^{\ast}}{\partial x^\alpha_a}\right)}{\partial v_{k,1}}
-\frac{\partial^\alpha_a}{\partial {x_2}^\alpha_a}\frac{\partial
L\left(x,y^{\ast},\frac{\partial^\alpha_a y^{\ast}}{\partial
x^\alpha_a}\right)}{\partial v_{k,2}} \right)d^\alpha_ax=0
\end{equation}
since $\eta_k|_{\partial R}=0$. The fractional Euler--Lagrange equation
\eqref{FracELEquation2} follows from \eqref{proof4} by using the fundamental
lemma \ref{Flemma}.
\end{proof}

Let $\epsilon$ be a real, and consider the following family of transformations:
\begin{equation}
\label{trans2}
\left\{
\begin{array}{l}
\overline x_i=x_i+\epsilon \tau_i (x,y(x)),
\quad i \in \{1,2\},\\
\overline y_k=y_k+\epsilon \xi_k (x,y(x)),
\quad k \in \{1,\ldots,m\},
\end{array}
\right.
\end{equation}
where $\tau_i$ and $\xi_k$ are such that there exist
$\frac{\partial^\alpha_a\tau_i}{\partial {x_j}^\alpha_a}$
and $\frac{\partial ^\alpha_a\xi_k}{\partial {x_j}^\alpha_a}$
for all $i,j \in \{1,2\}$ and all $ k \in \{1,\ldots,m\}$.
Denote by $\displaystyle\left[
\frac{\partial^\alpha_a \overline x}{\partial^\alpha_a x} \right]$
the matrix
$$
\displaystyle\left[
\begin{array}{cc}
\displaystyle\frac{\partial^\alpha_a \overline x_1/\partial
{x_1}^\alpha_a}{\partial^\alpha_a x_1/\partial {x_1}^\alpha_a}
& \displaystyle\frac{\partial^\alpha_a \overline x_1/\partial
{x_2}^\alpha_a}{\partial^\alpha_a x_2/\partial {x_2}^\alpha_a}\\
\displaystyle\frac{\partial^\alpha_a \overline x_2/\partial {x_1}^\alpha_a}{
\partial^\alpha_a x_1/\partial {x_1}^\alpha_a}
& \displaystyle\frac{\partial^\alpha_a \overline x_2/\partial {x_2}^\alpha_a}{
\partial^\alpha_a x_2/\partial {x_2}^\alpha_a}
\end{array}
\right]
= \displaystyle\left[
\begin{array}{cc}
\displaystyle 1+\frac{\epsilon}{(x_1-a)^{1-\alpha}}
\frac{\partial^\alpha_a\tau_1}{\partial {x_1}^\alpha_a}
& \displaystyle\frac{\epsilon}{(x_2-a)^{1-\alpha}}
\frac{\partial^\alpha_a\tau_1}{\partial {x_2}^\alpha_a}\\
\displaystyle\frac{\epsilon}{(x_1-a)^{1-\alpha}}
\frac{\partial^\alpha_a\tau_2}{\partial {x_1}^\alpha_a}
&\displaystyle 1+\frac{\epsilon}{(x_2-a)^{1-\alpha}}
\frac{\partial^\alpha_a\tau_2}{\partial {x_2}^\alpha_a}
\end{array}
\right].
$$

\begin{definition}
Functional $\mathcal{J}$ as in \eqref{funct2} is invariant under the family of
transformation \eqref{trans2} if for all $y_k$ and for all $x_i\in[0,1]$ we have
$$
L\left(\overline x,\overline y,\frac{\partial^\alpha_a\overline y}{
\partial\overline x^\alpha_a}\right)
det \left[ \frac{\partial^\alpha_a \overline x}{\partial^\alpha_a x} \right]
=L\left(x,y,\frac{\partial^\alpha_a y}{\partial x^\alpha_a}\right)
+\epsilon \frac{d^\alpha_a\Lambda}{dx^\alpha_a}(x,y)+o(\epsilon)
$$
for all $\epsilon$ in some neighborhood of zero.
\end{definition}

Using the same techniques as in the proof of Theorem~\ref{TeoInv},
we obtain a necessary condition of invariance
for the fractional variational problem \eqref{funct2}.

\begin{theorem}
\label{TeoInv2}
If $\mathcal{J}$ given by \eqref{funct2} is invariant
under transformations \eqref{trans2}, then
\begin{multline}
\label{eq:cni:md}
\sum_{i=1}^2 \frac{\partial L}{\partial x_i}\tau_i
+ \sum_{k=1}^m \frac{\partial L}{\partial y_k}\xi_k
+ \sum_{k=1}^m \sum_{i=1}^2\frac{\partial L}{\partial v_{k,i}}\left[
\frac{\partial ^\alpha_a \xi_k}{\partial {x_i}^\alpha_a}
- \frac{\partial ^\alpha_a y_k}{\partial {x_i}^\alpha_a}\left(
(\alpha-1)\frac{\tau_i}{x_i-a}+\frac{1}{(x_i-a)^{1-\alpha}}
\frac{\partial^\alpha_a \tau_i }{\partial {x_i}^\alpha_a} \right) \right]\\
+ L\left( \frac{1}{(x_1-a)^{1-\alpha}}
\frac{\partial^\alpha_a\tau_1}{\partial {x_1}^\alpha_a}
+ \frac{1}{(x_2-a)^{1-\alpha}} \frac{\partial^\alpha_a\tau_2}{
\partial {x_2}^\alpha_a}\right)=\frac{d^\alpha\Lambda}{dx^\alpha}.
\end{multline}
\end{theorem}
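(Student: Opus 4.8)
The plan is to mimic the proof of Theorem~\ref{TeoInv}, now carried out simultaneously over the two independent variables $x_1,x_2$ and the $m$ dependent variables $y_1,\dots,y_m$: substitute the transformed data into the invariance identity of the preceding definition, differentiate with respect to $\epsilon$, and set $\epsilon=0$. All the work lies in the first-order (in $\epsilon$) expansions of the two nonelementary ingredients, namely the Jacobian determinant $\det\left[\partial^\alpha_a\overline x/\partial^\alpha_a x\right]$ and the transformed partial conformable derivatives $\partial^\alpha_a\overline y_k/\partial\overline x_i^\alpha_a$.

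For the determinant, I would observe that the off-diagonal entries of the displayed matrix are already of order $\epsilon$, so their product enters only at order $\epsilon^2$; hence $\det\left[\partial^\alpha_a\overline x/\partial^\alpha_a x\right] = 1 + \epsilon\left(\frac{1}{(x_1-a)^{1-\alpha}}\frac{\partial^\alpha_a\tau_1}{\partial {x_1}^\alpha_a}+\frac{1}{(x_2-a)^{1-\alpha}}\frac{\partial^\alpha_a\tau_2}{\partial {x_2}^\alpha_a}\right)+o(\epsilon)$, which is exactly what will generate the last term $L(\cdots)$ in \eqref{eq:cni:md}. For the transformed derivatives I would apply the fractional chain rule of Proposition~\ref{Ra2}(iv) coordinatewise, as in the one-variable computation inside the proof of Theorem~\ref{TeoInv}, writing $\partial^\alpha_a\overline y_k/\partial\overline x_i^\alpha_a$ as $\partial^\alpha_a\overline y_k/\partial {x_i}^\alpha_a$ divided by $(\overline x_i-a)^{\alpha-1}\,\partial^\alpha_a\overline x_i/\partial {x_i}^\alpha_a$, and then Taylor expand: using $\overline x_i-a=(x_i-a)(1+\epsilon\tau_i/(x_i-a))$, $\partial^\alpha_a\overline y_k/\partial {x_i}^\alpha_a=\partial^\alpha_a y_k/\partial {x_i}^\alpha_a+\epsilon\,\partial^\alpha_a\xi_k/\partial {x_i}^\alpha_a$, and $\partial^\alpha_a\overline x_i/\partial {x_i}^\alpha_a=(x_i-a)^{1-\alpha}+\epsilon\,\partial^\alpha_a\tau_i/\partial {x_i}^\alpha_a$, a first-order expansion produces precisely the bracketed expression multiplying $\partial L/\partial v_{k,i}$ in \eqref{eq:cni:md}.

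Finally I would substitute all of this, together with $\overline x_i=x_i+\epsilon\tau_i$ and $\overline y_k=y_k+\epsilon\xi_k$, into the invariance identity, differentiate both sides with respect to $\epsilon$ by the ordinary chain rule, and evaluate at $\epsilon=0$. The $\overline x_i$-slots then yield $\sum_{i=1}^2(\partial L/\partial x_i)\tau_i$, the $\overline y_k$-slots yield $\sum_{k=1}^m(\partial L/\partial y_k)\xi_k$, the derivative slots yield the double sum over $k$ and $i$, differentiating the determinant factor yields the term $L(\cdots)$, and the right-hand side yields $d^\alpha_a\Lambda/dx^\alpha_a$; collecting these equalities gives \eqref{eq:cni:md}.

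The main obstacle is the bookkeeping in the second step: correctly combining the three nested $O(\epsilon)$ factors in the denominator of $\partial^\alpha_a\overline y_k/\partial\overline x_i^\alpha_a$ --- the power $(\overline x_i-a)^{\alpha-1}$, the conformable derivative $\partial^\alpha_a\overline x_i/\partial {x_i}^\alpha_a$, and the numerator --- and checking that the off-diagonal Jacobian entries genuinely drop out of the determinant at first order, so that no cross terms with $j\neq i$ survive. Once these expansions are in hand, the rest is a verbatim transcription of the one-variable argument, summed over $i\in\{1,2\}$ and $k\in\{1,\dots,m\}$.
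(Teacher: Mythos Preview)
Your proposal is correct and follows essentially the same approach as the paper. The paper's proof simply records the two key formulas you identify --- the expression $\frac{\partial^\alpha_a\overline y_k}{\partial{\overline x_i}^\alpha_a}=\frac{\partial^\alpha_a y_k/\partial {x_i}^\alpha_a+\epsilon\,\partial^\alpha_a \xi_k/\partial {x_i}^\alpha_a}{(x_i+\epsilon\tau_i-a)^{\alpha-1}[(x_i-a)^{1-\alpha}+\epsilon\,\partial^\alpha_a \tau_i/\partial {x_i}^\alpha_a]}$ and $\left.\frac{d}{d\epsilon}\det[\partial^\alpha_a\overline x/\partial^\alpha_a x]\right|_{\epsilon=0}=\sum_{i=1}^2 (x_i-a)^{\alpha-1}\partial^\alpha_a\tau_i/\partial {x_i}^\alpha_a$ --- and then declares \eqref{eq:cni:md}; your plan spells out the same computation in more detail but with no substantive difference.
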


\begin{proof}
Using relations
$$
\displaystyle\frac{\partial^\alpha_a\overline y_k}{
\partial{{\overline x}_i}^\alpha_a}
=\frac{\frac{\partial^\alpha_a y_k}{\partial {x_i}^\alpha_a}
+\epsilon \frac{\partial^\alpha_a \xi_k}{\partial {x_i}^\alpha_a} }{(x_i
+\epsilon\tau_i-a)^{\alpha-1}\left[ (x_i-a)^{1-\alpha}
+\epsilon \frac{\partial^\alpha_a \tau_i }{\partial {x_i}^\alpha_a}  \right]}
$$
and
$$
\displaystyle\left.\frac{d}{d\epsilon}det \left[
\frac{\partial^\alpha \overline x}{\partial^\alpha x} \right]\right|_{\epsilon=0}
=\frac{1}{(x_1-a)^{1-\alpha}} \frac{\partial^\alpha_a\tau_1}{\partial {x_1}^\alpha_a}
+ \frac{1}{(x_2-a)^{1-\alpha}} \frac{\partial^\alpha_a\tau_2}{\partial {x_2}^\alpha_a},
$$
we conclude that \eqref{eq:cni:md} holds.
\end{proof}

\begin{remark}
When $\alpha=1$ and $\Lambda\equiv0$, Theorem~\ref{TeoInv2}
reduces to the standard one (\textrm{cf.} \cite{Logan}):
equality \eqref{eq:cni:md} simplifies to
$$
\sum_{i=1}^2 \frac{\partial L}{\partial x_i}\tau_i
+ \sum_{k=1}^m \frac{\partial L}{\partial y_k}\xi_k
+ \sum_{k=1}^m \sum_{i=1}^2\frac{\partial L}{\partial v_{k,i}}\left[
\frac{\partial \xi_k}{\partial x_i}-\frac{\partial y_k}{\partial x_i}
\frac{\partial \tau_i }{\partial x_i}\right]
+L\left(\frac{\partial\tau_1}{\partial x_1}
+ \frac{\partial\tau_2}{\partial x_2}\right)=0.
$$
\end{remark}

\begin{corollary}
If $\mathcal{J}$ given by \eqref{funct2} is invariant under \eqref{trans2},
$\tau_1\equiv0\equiv\tau_2$, and no Gauge term is involved
(i.e., $\Lambda\equiv0$), then
$$
\sum_{k=1}^m \frac{\partial L}{\partial y_k}\xi_k
+  \sum_{k=1}^m \sum_{i=1}^2\frac{\partial L}{\partial
v_{k,i}}\frac{\partial ^\alpha_a \xi_k}{\partial {x_i}^\alpha_a}=0.
$$
\end{corollary}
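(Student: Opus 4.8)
The plan is to derive the statement directly from the necessary condition of invariance in Theorem~\ref{TeoInv2}, of which this corollary is simply a specialization. First I would apply Theorem~\ref{TeoInv2}: since $\mathcal{J}$ as in \eqref{funct2} is invariant under \eqref{trans2}, the identity \eqref{eq:cni:md} holds. The remaining work is to feed the two extra hypotheses $\tau_1\equiv\tau_2\equiv0$ and $\Lambda\equiv0$ into that identity and record what survives.

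Setting $\tau_1\equiv\tau_2\equiv0$ kills the term $\sum_{i=1}^2\frac{\partial L}{\partial x_i}\tau_i$. By linearity of the conformable derivative (Proposition~\ref{Ra2}(i); equivalently Remark~\ref{Ra1}), $\frac{\partial^\alpha_a\tau_i}{\partial {x_j}^\alpha_a}=0$ for all $i,j\in\{1,2\}$, so the correction factor
\[
(\alpha-1)\frac{\tau_i}{x_i-a}+\frac{1}{(x_i-a)^{1-\alpha}}\frac{\partial^\alpha_a\tau_i}{\partial {x_i}^\alpha_a}
\]
vanishes, and the double sum over $k$ and $i$ collapses to $\sum_{k=1}^m\sum_{i=1}^2\frac{\partial L}{\partial v_{k,i}}\frac{\partial^\alpha_a\xi_k}{\partial {x_i}^\alpha_a}$. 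The last term on the left of \eqref{eq:cni:md}, namely $L\left(\frac{1}{(x_1-a)^{1-\alpha}}\frac{\partial^\alpha_a\tau_1}{\partial {x_1}^\alpha_a}+\frac{1}{(x_2-a)^{1-\alpha}}\frac{\partial^\alpha_a\tau_2}{\partial {x_2}^\alpha_a}\right)$, is then also zero. Finally, $\Lambda\equiv0$ makes the right-hand side $\frac{d^\alpha\Lambda}{dx^\alpha}$ vanish.

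Collecting the surviving terms, \eqref{eq:cni:md} reduces to $\sum_{k=1}^m\frac{\partial L}{\partial y_k}\xi_k+\sum_{k=1}^m\sum_{i=1}^2\frac{\partial L}{\partial v_{k,i}}\frac{\partial^\alpha_a\xi_k}{\partial {x_i}^\alpha_a}=0$, which is exactly the claimed identity. There is essentially no obstacle in this argument: the only point to verify is that the conformable fractional derivative of the identically zero function is zero, which is immediate from the definition (or from Remark~\ref{Ra1}); everything else is a direct substitution into Theorem~\ref{TeoInv2}.
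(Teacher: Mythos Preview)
Your proposal is correct and follows exactly the approach the paper intends: the corollary is stated without proof immediately after Theorem~\ref{TeoInv2}, so it is meant to be the direct specialization of \eqref{eq:cni:md} under $\tau_1\equiv\tau_2\equiv0$ and $\Lambda\equiv0$, which is precisely what you carry out.
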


It remains an open question how to obtain a Noether constant
of motion for the conformable fractional multi-dimensional case.

% ----------------------------------------

\section*{Acknowledgments}

This work was partially supported by CNPq and CAPES (Brazilian research funding
agencies), and by Portuguese funds through the \emph{Center for Research and
Development in Mathematics and Applications} (CIDMA), and \emph{The Portuguese 
Foundation for Science and Technology} (FCT), within project UID/MAT/04106/2013.
The authors are grateful to three anonymous Reviewers 
for useful comments and suggestions.

% ----------------------------------------

% ----------------------------------------

\end{document}